\def\makeheadbox{{%
\hbox to0pt{\vbox{\baselineskip=10dd\hrule\hbox
to\hsize{\vrule\kern3pt\vbox{\kern3pt
\hbox{\bfseries Draft for discussion }
\hbox{Date of this version: 13.04.21}
\kern3pt}\hfil\kern3pt\vrule}\hrule}%
\hss}}}
\newtheorem{theorem}{Theorem}[section]
\newtheorem{lemma}[theorem]{Lemma}
\newcounter{mnote}
\let\oldmarginpar\marginpar
\renewcommand\marginpar[1]{\-\oldmarginpar[\raggedleft\footnotesize #1]
  {\raggedright\footnotesize #1}}
\numberwithin{equation}{section}
\setlist[enumerate]{nosep}
\def\uu{\undertilde{u}}
\def\uv{\undertilde{v}}
\def\uL{\undertilde{L}}
\def\utau{\undertilde{\tau}}
\def\up{\undertilde{p}}
\def\uq{\undertilde{q}}
\def\uy{\undertilde{y}}
\def\uz{\undertilde{z}}
\def\dv{{\rm div}}
\begin{document}
\title{A mixed element scheme of Helmholtz transmission eigenvalue problem for anisotropic media}


\author{Qing Liu$^{1,2}$, Tiexiang Li$^{1,2,*}$ and Shuo Zhang$^{3,4}$}
\thanks{$^{1}$ School of Mathematics and Shing-Tung Yau Center, Southeast University, Nanjing 210096, China.}
\thanks{ $^{2}$ Nanjing Center for Applied Mathematics, Nanjing 211135,  China.}
\thanks{ $^{3}$ LSEC, ICMSEC, Academy of Mathematics and Systems Science, Chinese Academy of Sciences, Beijing 100190, China.}
\thanks{$^{4}$ University of Chinese Academy of Sciences, Beijing 100049, China.}
\thanks{ * Corresponding author. E-mail address: txli@seu.edu.cn.}

\keywords{mixed finite element method, inf-sup stability, divergence-free properties, transmission eigenvalue, generalized eigenvalue problem.}

\begin{abstract}
In this paper, we study the Helmholtz transmission eigenvalue problem for inhomogeneous anisotropic media with the index of refraction $n(x)\equiv 1$ in two and three dimension. Starting with a nonlinear fourth order formulation established by Cakoni, Colton and Haddar\cite{Cakoni2009}, by introducing some auxiliary variables, we present an equivalent mixed formulation for this problem, followed up with the finite element discretization. Using the proposed scheme, we rigorously show that the optimal convergence rate for the transmission eigenvalues both on convex and nonconvex domains can be expected. Moreover, by this scheme, we will obtain a sparse generalized eigenvalue problem whose size is so demanding even with a coarse mesh that its smallest few real eigenvalues fail to be solved by the shift and invert method. We partially overcome this critical issue by deflating the almost all of the $\infty$ eigenvalue of huge multiplicity, resulting in a drastic reduction of the matrix size without
deteriorating the sparsity. Extensive numerical examples are reported to demonstrate the effectiveness and efficiency of the proposed scheme.
\end{abstract}

\maketitle


\section{Introduction}

The transmission eigenvalue problem, first introduced by Colton and Monk \cite{COLTON1988} and Kirsch \cite{Kirsch1986}, is generally a system of coupled eigenvalue problems defined on the support of scattering objects. As the transmission eigenvalues carry some important information about the material properties of the scattering object\cite{Giorgi2012,Peters2016,Sun2010}, the problem has found more and more applications in practice and is an important research field of inverse scattering theory, especially in the case of acoustic and electromagnetic waves\cite{Cakoni2014,Dirac1953888,Cakoni2009,Cakoni20077}.

In this paper, we consider the following transmission eigenvalue problem for inhomogeneous anisotropic media. Specifically, we will find $k \in \mathbb{C}\backslash \{0\}$ and non-trivial $w$ and $v$ such that
\begin{equation}{\label{2.1}}
\left\{
\begin{aligned}
& \nabla  \cdot A\left( x \right)\nabla w + {k^2}n(x)w = 0 & {\rm{in}} &\ \Omega,\\
&\Delta v + {k^2}v = 0   &{\rm{in}}& \ \Omega,\\
&w=v  & \ {\rm{on}} &\ \partial \Omega,\\
&\nu  \cdot A\left( x \right)\nabla w = \nu  \cdot \nabla v   & \ {\rm{on}} &\ \partial \Omega,
\end{aligned}
\right.
\end{equation}
where $\Omega \subset {\mathbb{R}^d},\ d = 2,3$ is a bounded polygon or polyhedron with boundary $\partial \Omega$, $\nu$ is the unit outward normal vector of $\partial \Omega$, the matrix index of refraction $A(x)  \in {\mathbb{R}^{d \times d}}$ is symmetric positive definite everywhere, and the index of refraction $n(x)>0$ is bounded. The values of $k$ for which the transmission eigenvalue problem \eqref{2.1} has non-trivial solutions $w$ and $v$ are called transmission eigenvalues. This problem arises from the electromagnetic wave scattering problem in two dimension (2D) \cite{Cakoni2013}
 and the acoustic wave scattering problem in three dimension (3D) \cite{Colton2003}, where the smallest real transmission eigenvalue is essential in the reconstruction of the matrix index of refraction \cite{Cakoni2009}. In this paper, we will focus on the case of $n(x)\equiv 1$ on $\Omega$, which implies that the scatterer has the same permeability in 2D or sound speed in 3D as the external medium.

 There have been a few algorithms for numerical calculation of transmission eigenvalues for anisotropic media. Cakoni, Colton and Haddar \cite{Cakoni2009} provided a method for
determining transmission eigenvalues by far field equation. Ji and Sun \cite{Ji20133} proposed a multi-level finite element method for the transmission eigenvalue problem. An and Shen \cite{Shen2014} developed a spectral method to compute the transmission eigenvalues.  Lechleiter and Peters \cite{Lechleiter2015} calculated  transmission eigenvalues by inside-outside duality technique. Xie and Wu \cite{Xie2017} proposed a multi-level correction method based on finite element to solve the transmission eigenvalue problem. Kleefeld and Pieronek \cite{Kleefeld2018} computed the transmission eigenvalues by fundamental solutions method. Gong et al. \cite{https://doi.org/10.48550/arxiv.2001.05340} transformed this problem into an eigenvalue problem of a holomorphic operator function, then Lagrange finite elements and the spectral projection are used to compute the transmission eigenvalues. Liu and Sun \cite{Liu2021} proposed a Bayesian inversion approach for the transmission eigenvalue problem. We particularly note that, for the case $n(x)>1$ or $n(x)<1$, the existence and uniqueness of the solution to \eqref{2.1} were established by Xie and Wu \cite{Xie2017}, which may serve as a theoretical foundation for corresponding numerical algorithms. Yet, to the best of our knowledge, very few numerical results of 3D Helmholtz transmission eigenvalue problem for anisotropic media have been reported.

 Cakoni, Colton and Haddar\cite{Cakoni2009} established the basic theory of the transmission eigenvalue problem for anisotropic media in different approaches depending on whether $n(x) \equiv 1$ or $n(x) \neq 1$. In particular, for the case of $n(x)\equiv1$, the main technical novelty of \cite{Cakoni2009} was to  introduce $\undertilde{u} :=A\nabla w - \nabla v$ and $\lambda = {k^2}$, and then recast \eqref{2.1} into the transmission eigenvalue problem as a fourth order problem as follows
 \begin{equation}{\label{2.3}}
 ( {\nabla \nabla  \cdot  + {\lambda}A^{-1}} ){( {A^{-1} - I} )^{ - 1}}( {\nabla \nabla  \cdot + {\lambda})\uu}  = 0,
\end{equation}
where $I$ is the identity matrix. It is interesting to note that, in order to solve the transmission
eigenvalue problem for isotropic media, theories and algorithms have been developed in \cite{Cakoni201444,Colton2010,Ji2012,Ji2016,Li2018,Xi2018} and so on. Among these studies, the fourth order problem of the transmission eigenvalue problem was also constructed in \cite{Cakoni201444,Ji2012,Ji2016,Xi2018}, but based on an auxiliary variable $w-v$ rather than $A\nabla w-\nabla v$ in anisotropic case.

In this paper, we study the numerical method for \eqref{2.3}, which is seldomly discussed in literature. Assuming that
$${\kappa_*}: = \mathop {\inf }\limits_{x \in \Omega} \mathop {\inf }\limits_{\xi  \in {\mathbb{R}^d},\left|| \xi  \right|| _{{l^2}}= 1} \left( {\xi^\top A\left( x \right)\xi } \right)\  {\rm and} \ {\kappa^*}: = \mathop {\sup }\limits_{x \in \Omega} \mathop {\sup }\limits_{\xi  \in {\mathbb{R}^d} ,\left|| \xi  \right||_{{l^2}} = 1} \left( {\xi^\top A\left( x \right)\xi } \right) $$
satisfy either $0<{\kappa_*}\leq{\kappa^*}<1$ or $1<{\kappa_*}\leq{\kappa^*}<\infty$, by the spectral theory of compact operator, it holds that \cite{Cakoni2009}
\begin{enumerate}
\item If $ \kappa^{\ast}<1$ or $\kappa_{\ast}>1$,  problem \eqref{2.3} has an infinite countable set of real transmission eigenvalues with $+ \infty$ as the only accumulation point.
\item Moreover, denoting by $\kappa_{1}(\Omega)$ the first Dirichlet eigenvalue of $-\Delta$ on $\Omega$, let $k$ be a real transmission eigenvalue and $\lambda = k^2$, the following result holds.
$$
\frac{\lambda}{{\kappa _1}(\Omega) }\ge\left\{
\begin{array}{ll}
\| A^{-1} \|_2^{-1}, & \kappa^{\ast}<1,
\\
1, & \kappa_{\ast}>1.
\end{array}
\right.
$$
\end{enumerate}
Our key idea here is to rewrite the nonlinear eigenvalue problem \eqref{2.3} to an equivalent linear eigenvalue problem by introducing some auxiliary variables, followed up with the finite element discretization and calculation of the smallest few real transmission eigenvalues. The main ingredient of our approach is, as shown below, due to the huge kernel of the $\dv$ operator, a direct introduction of auxiliary variables will lead to an eigenvalue problem of an ill-defined operator, and to avoid this issue, we use the Helmholtz decomposition to deflate the kernel and work directly on $H^1$ and $L^2$ spaces. The final problem is a linear eigenvalue problem with a compact operator. The discretization scheme is easy to implement both on convex and nonconvex connected domains with firm theoretical support. On the other hand, the goal of computing a few smallest real transmission eigenvalues is hindered by the tremendous size of the sparse generalized eigenvalue problem (GEP) obtained by discretization, in that the $\boldsymbol{LDL}$ factorization cannot fit into the computer memory. We partially resolve this critical issue by deflating the almost all of the $\infty$ eigenvalue of huge multiplicity, resulting in a drastic reduction of the matrix size without
deteriorating the sparsity. Specifically, the ratio of the size of the original GEP to that of the GEP we finally solved is about 3 in 2D and 10 in 3D. Both the computational time and cost are significantly reduced, especially in the 3D case.  In a word, this paper provides a practical method for \eqref{2.3} and carries out 3D numerical investigations of \eqref{2.1}, arguably for the first time.

The remaining of this paper is organized as follows. In Section \ref{sec2}, we present an equivalent linear formulation of the transmission eigenvalue problem for anisotropic media. In Section \ref{sec3}, a mixed element scheme is proposed to discrete the mixed formulation. In Section \ref{sec4}, we deflate the almost all of the huge eigenspace associated with the $\infty$ eigenvalue of the GEP obtained by direct discretization, before employing any eigensolver. Numerical examples are presented in Section \ref{sec5}. Finally, we draw some conclusions and discuss some future works in Section \ref{sec6}.

\paragraph{\bf Notations}

To begin with, let ${\oplus}$ represent the direct sum of space or matrix, and $\otimes $ represent the Kronecker product of matrix. Define spaces
$$\uL^2(\Omega):=(L^{2}(\Omega))^{d}, \ L_{0}^{2}(\Omega):=\{ \tau  \in {L^2}(\Omega ):{(\tau,1)  = 0} \},
$$
$$\tilde H_0^1(\Omega) := H_0^1(\Omega)\cap L_{0}^{2}(\Omega), \ \tilde H^1(\Omega) := H^1(\Omega)\cap L_{0}^{2}(\Omega),$$
and
$$
\mathring{H}_0(\dv,\Omega):=\{\utau\in H_0(\dv,\Omega):\dv\utau=0\},
$$
$$
H^1_0(\dv,\Omega):=\{\utau\in H_0(\dv,\Omega):\dv\utau\in H^1_0(\Omega)\}.
$$

Now we introduce the symbol $\eqslantless$ to denote an order of complex numbers. Let $c_{k} = \rho_{k}e^{i{\theta}_{k}}$, $k =1,2$ be two complex numbers, with $\rho_{k}\geq 0$ and $0\leq \theta_{k}< 2\pi$. Then $c_{1}\eqslantless c_{2}$ if and only if one of the items below holds:\\
1. $\rho_{1} = \rho_{2} = 0;$\\
2. $\rho_{1} < \rho_{2} ;$\\
3. $\rho_{1} = \rho_{2}\neq 0$ and $\theta_{1}\geq \theta_{2}$.\\
It is evident that if $c_{1}\eqslantless c_{2}$ and $c_{2}\eqslantless c_{3}$, then $c_{1}\eqslantless c_{3}$. Coherently, we use the symbol $\eqslantgtr$, whereas $c_{2} \eqslantgtr c_{1}$ if and only if $c_{1}\eqslantless c_{2}$.

\section{Equivalent stable linear eigenvalue problems}\label{sec2}

Let $P = (A^{-1}-I)^{-1}$, as shown in \cite{Cakoni2009}, the variational form of \eqref{2.3} is
 \begin{equation}{\label{2.4}}
 ( {P( {\nabla \nabla  \cdot \undertilde{u} + {\lambda}\undertilde{u}} ),( {\nabla \nabla  \cdot \undertilde{v} + {\lambda}A^{-1}\undertilde{v}} )} ) = 0, \ \forall \uv  \in {H_0^1(\dv,\Omega)}.
\end{equation}

In this section, we rewrite the fourth order problem \eqref{2.4} to equivalent stable linear eigenvalue problems. We discuss this problem in two cases: $\kappa^{\ast}<1$ and $\kappa_{\ast}>1$.

\subsection{ Case\ \uppercase\expandafter{\romannumeral1: $\kappa^{\ast}<1$.}}\label{case1}
For an eigenpair  $(\lambda,\uu)$ of \eqref{2.4}, by introducing
$$
\undertilde{y} = \lambda  \undertilde{u},\ \ \mbox{and}\ \   \undertilde{p}= P\nabla \nabla  \cdot \undertilde{u}+ ( I + P)\undertilde{y},
$$
we have for any $\uv\in H^1_0(\dv,\Omega)$, $\uz\in\uL^2(\Omega)$ and $\uq\in \uL^2(\Omega),$
\begin{equation}
\left\{
\begin{aligned}
& ( {P\nabla \nabla  \cdot \undertilde{u},\nabla \nabla  \cdot \undertilde{v}} ) &+& ( {P\undertilde{y},\nabla \nabla  \cdot \undertilde{v}} ) & &=  \lambda ( {\nabla  \cdot \undertilde{u},\nabla  \cdot \undertilde{v}} ) - \lambda ( {\undertilde{p},\undertilde{v}} )\\
& ( {P\nabla \nabla  \cdot \undertilde{u},\undertilde{z}} ) &+ &( {( {I + P} )\undertilde{y},\undertilde{z}} ) & - ( {\undertilde{p},\undertilde{z}} ) &= 0\\
&&-& ( {\undertilde{y},\undertilde{q}} ) & &=  - \lambda  ( {\undertilde{u},\undertilde{q}} ).
\end{aligned}
\right.
\end{equation}
Now, denote
\begin{equation}
(\nabla {\tilde H^1(\Omega)})^\perp:=\{\uz\in \uL^2(\Omega):(\nabla w,\uz)=0,\ \forall\,w\in \tilde{H}^1(\Omega)\}.
\end{equation}
Then $(\nabla {\tilde H^1(\Omega)})^\perp\subset \mathring{H}_0(\dv,\Omega)$. As $\nabla {\tilde H^1(\Omega)}$ is closed in $\uL^2(\Omega)$,
\begin{equation}\label{eq:hd}
\uL^2(\Omega) = \nabla {\tilde H^1(\Omega)}{ \oplus ^ {{\bot}} }(\nabla {\tilde H^1(\Omega)})^\perp.
\end{equation}
Then, we have $\up=\nabla r +(\nabla r)^\perp$ with $r\in\tilde{H}^1(\Omega)$ and $(\nabla r)^\perp\in (\nabla {\tilde H^1(\Omega)})^\perp$, and any $\uq$ can be rewritten as $\uq=\nabla s+(\nabla s)^\perp$ with $s\in\tilde{H}^1(\Omega)$ and $(\nabla s)^\perp\in (\nabla {\tilde H^1(\Omega)})^\perp$, then
\begin{equation}
\left\{
\begin{aligned}
& ( {P\nabla \nabla  \cdot \undertilde{u},\nabla \nabla  \cdot \undertilde{v}} ) &+& ( {P\undertilde{y},\nabla \nabla  \cdot \undertilde{v}} ) & &=  \lambda ( {\nabla  \cdot \undertilde{u},\nabla  \cdot \undertilde{v}} ) - \lambda ( \nabla r+(\nabla r)^\perp,\undertilde{v} )\\
& ( {P\nabla \nabla  \cdot \undertilde{u},\undertilde{z}} ) &+ &( {( {I + P} )\undertilde{y},\undertilde{z}} ) & - ( \nabla r+(\nabla r)^\perp,\undertilde{z} ) &= 0\\
&&-& ( \undertilde{y},\nabla s+(\nabla s)^\perp) & &=  - \lambda  ( \undertilde{u},\nabla s+(\nabla s)^\perp).
\end{aligned}
\right.
\end{equation}
Choosing $\uv\in \mathring{H}_0(\dv,\Omega)$ arbitrarily leads to that $(\nabla r)^\perp=0$. It follows then that
\begin{equation}\label{3.3}
\left\{
\begin{aligned}
& ( {P\nabla \nabla  \cdot \undertilde{u},\nabla \nabla  \cdot \undertilde{v}} ) &+& ( {P\undertilde{y},\nabla \nabla  \cdot \undertilde{v}} ) & &=  \lambda ( {\nabla  \cdot \undertilde{u},\nabla  \cdot \undertilde{v}} ) + \lambda ( r, \nabla  \cdot \undertilde{v} )\\
& ( {P\nabla \nabla  \cdot \undertilde{u},\undertilde{z}} ) &+ &( {( {I + P} )\undertilde{y},\undertilde{z}} ) & - ( \nabla r,\undertilde{z} ) &= 0\\
&&-& ( \undertilde{y},\nabla s) & &=   \lambda  (  \nabla  \cdot\undertilde{u}, s).
\end{aligned}
\right.
\end{equation}
Introducing formally $\varphi=\nabla\cdot\uu$ and $\psi=\nabla\cdot \uv$ leads us to the linear eigenvalue problem: find $\lambda\in\mathbb{C}$ and $(\varphi,\uy,r)\in V:=\tilde{H}^1_0(\Omega)\times \uL^2(\Omega)\times \tilde{H}^1(\Omega)$, such that, for any $(\psi,\uz,s)\in V$,
\begin{equation}\label{3.4}
\left\{
\begin{aligned}
& ( {P\nabla \varphi ,\nabla \psi } ) &+&  ( {P\undertilde{y},\nabla \psi }) & &=  \lambda ( ( {\varphi ,\psi } )+ ( {r,\psi }) ) \\
&( {P\nabla \varphi ,\undertilde{z}} ) &+ &( {( {I + P} )\undertilde{y},\undertilde{z}} ) & - ( {\nabla r,\undertilde{z}} ) &= 0\\
&&-&( {\undertilde{y},\nabla s} )& &=  \lambda ( {\varphi ,s} ).
\end{aligned}
\right.
\end{equation}

Equip $V$ with the norm
$${\| {( {\varphi,\uy,r})} \|_V}= {( {\| \varphi \|_{1,\Omega}^2 + \| \uy \|_{0,\Omega}^2 + \| r \|_{1,\Omega}^2} )^{1/2}},$$
then $V$ is a Hilbert space. Four bilinear forms are defined:
\begin{equation*}
\begin{aligned}
a ( {( {\varphi ,\uy} ),( {\psi ,\uz} )} ): = &( {P\nabla \varphi ,\nabla \psi } ) + ( {P\uy,\nabla \psi } ) + ( {P\nabla \varphi ,\uz} ) + ( {( {I + P} )\uy,\uz} ),\\
b ( {( {\varphi,\uy} ), {s} } ):= &- ( {\uy,\nabla s} ),
\end{aligned}
\end{equation*}
and
\begin{equation*}
\begin{aligned}
{a_V}( {( {\varphi ,\undertilde{y},r} ),( {\psi ,\undertilde{z},s} )} ): =& ( {P\nabla \varphi ,\nabla \psi } ) + ( {P\undertilde{y},\nabla \psi } ) + ( {P\nabla \varphi ,\undertilde{z}} )+ ( {( {I + P} )\undertilde{y},\undertilde{z}} ) - ( {\nabla r,\undertilde{z}} ) - ( {\undertilde{y},\nabla s} ),\\
{b_V}( {( {\varphi ,\undertilde{y},r} ),( {\psi ,\undertilde{z},s} )} ): = &( {\varphi ,\psi } ) + ( {r,\psi }) + ( {\varphi ,s} ).
\end{aligned}
\end{equation*}
Then, $a( { \cdot , \cdot } )$, $b( { \cdot , \cdot } )$, ${a_{V}}( { \cdot , \cdot } )$ and ${b_{V}}( { \cdot , \cdot } )$ are all symmetric, continuous and bounded. Associated with ${a_V}( { \cdot , \cdot })$ and ${b_V}( { \cdot , \cdot } )$,  we define an operator ${T_V}: V\rightarrow V$ by
\begin{equation}\label{eq:deftv}
{a_{V}(T_{V}{( {\varphi ,\undertilde{y},r} ),( {\psi ,\undertilde{z},s})} )}={b_V( {( {\varphi ,\undertilde{y},r} ),( {\psi ,\undertilde{z},s})})}, \ \forall ( {\psi , \undertilde{z},s}) \in V.
\end{equation}

\begin{lemma}\label{lemma 3.2} $T_{V}$ is well-defined and compact on $V$.
\end{lemma}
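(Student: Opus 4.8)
The plan is to recognize that the defining relation \eqref{eq:deftv} is, through $a_V$ and $b_V$, a symmetric saddle‑point problem, to verify the two Brezzi conditions so that the solution operator $A_V^{-1}$ is bounded, and then to obtain $T_V=A_V^{-1}B_V$ as a bounded map composed with a compact one.

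First I would expose the mixed structure. Set $W:=\tilde{H}^1_0(\Omega)\times\uL^2(\Omega)$ and $Q:=\tilde{H}^1(\Omega)$, and group the first two components. A direct inspection of the definitions gives
\[
a_V\big((\varphi,\uy,r),(\psi,\uz,s)\big)=a\big((\varphi,\uy),(\psi,\uz)\big)+b\big((\psi,\uz),r\big)+b\big((\varphi,\uy),s\big),
\]
so that \eqref{eq:deftv} is exactly a mixed problem with leading form $a$ on $W\times W$ and constraint form $b$ on $W\times Q$. Continuity of all the forms is already recorded in the excerpt, so by Brezzi's theorem well-posedness will follow from (i) coercivity of $a$ on $W$ and (ii) an inf-sup condition for $b$.

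For (i) I would exploit the hypothesis $\kappa^*<1$, which gives $A<I$ pointwise, hence $P=(A^{-1}-I)^{-1}$ is symmetric positive definite with eigenvalues bounded below by $\kappa_*/(1-\kappa_*)>0$, and $I+P=(I-A)^{-1}$ is SPD as well. The decisive algebraic observation is the pointwise identity
\[
g^\top P g+2\,g^\top P\uy+\uy^\top(I+P)\uy=(g+\uy)^\top P(g+\uy)+|\uy|^2,
\]
applied with $g=\nabla\varphi$; since the per-component quadratic form $(g+\uy)^\top P(g+\uy)+|\uy|^2$ is uniformly positive definite in $(g,\uy)$, integration yields $a((\varphi,\uy),(\varphi,\uy))\gtrsim\|\nabla\varphi\|_{0,\Omega}^2+\|\uy\|_{0,\Omega}^2$, and the Poincaré inequality on $\tilde{H}^1_0(\Omega)$ upgrades this to $a((\varphi,\uy),(\varphi,\uy))\gtrsim\|(\varphi,\uy)\|_W^2$. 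For (ii), given $s\in\tilde{H}^1(\Omega)$ I would insert the explicit test pair $(\varphi,\uy)=(0,-\nabla s)$, so that $b((0,-\nabla s),s)=\|\nabla s\|_{0,\Omega}^2$ while $\|(0,-\nabla s)\|_W=\|\nabla s\|_{0,\Omega}$; the zero-mean constraint in $\tilde{H}^1(\Omega)$ supplies a Poincaré–Wirtinger bound $\|s\|_{1,\Omega}\lesssim\|\nabla s\|_{0,\Omega}$, delivering a uniform inf-sup constant $\beta>0$.

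With (i) and (ii) established, Brezzi's theorem gives global inf-sup stability of $a_V$ on $V\times V$, so the operator $A_V$ associated with $a_V$ is boundedly invertible and $T_V$ is well-defined. For compactness I would factor $T_V=A_V^{-1}B_V$, where $B_V:V\to V$ is the Riesz representative of $b_V$, and observe that $b_V((\varphi,\uy,r),(\psi,\uz,s))=(\varphi,\psi)+(r,\psi)+(\varphi,s)$ depends on $(\varphi,r)$ only through their $L^2$-pairings with $\psi,s$. Hence along any bounded sequence in $V$ the compact embeddings $\tilde{H}^1_0(\Omega)\hookrightarrow L^2(\Omega)$ and $\tilde{H}^1(\Omega)\hookrightarrow L^2(\Omega)$ (Rellich–Kondrachov, using that $\Omega$ is a bounded connected polytope) let me extract a subsequence along which $B_V$ converges in $V$; thus $B_V$ is compact and $T_V=A_V^{-1}B_V$ is compact. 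I expect the main obstacle to be step (i): confirming genuine coercivity of $a$, which rests on the algebraic reduction above together with the uniform spectral bounds on $P$ and $I+P$ coming from $\kappa^*<1$; once that identity is secured, the inf-sup step and the compactness argument are routine.
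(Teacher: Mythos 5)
Your proposal is correct and follows essentially the same route as the paper: coercivity of $a(\cdot,\cdot)$ on $\tilde H^1_0(\Omega)\times\uL^2(\Omega)$ by a completion of squares, the inf-sup condition for $b(\cdot,\cdot)$ via the test choice $\uy=-\nabla s$ with Poincar\'e on the zero-mean space, and compactness from the Rellich embedding of the $\varphi$- and $r$-components. The only cosmetic difference is your algebraic identity $(g+\uy)^\top P(g+\uy)+|\uy|^2$, which is a somewhat cleaner completion of squares than the paper's $\hat P=P+\kappa^*I$ manipulation, but it plays exactly the same role.
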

\begin{proof}
Denote ${\hat{P} ={P + {\kappa ^ * }I}} $, by the Poincar\'e inequality, the following two inequalities hold.
$$
\begin{aligned}
a( {( {\varphi ,\undertilde{y}} ) ,( {\varphi ,\undertilde{y}} )} ) &=\| {{{\hat{P}}^{ - \frac{1}{2}}}P\nabla \varphi  + {{\hat{P}}^{\frac{1}{2}}}\undertilde{y}} \|_{0,\Omega}^2+
  (1 - {\kappa ^ * })\| \undertilde{y} \|_{0,\Omega}^2 + ((P - P{\hat{P}^{ - 1}}P)\nabla \varphi ,\nabla \varphi )\\
 & \geq  (1 - {\kappa ^ * })\| \undertilde{y} \|_{0,\Omega}^2 + \kappa_*(1-\kappa ^ *)/((1-\kappa_*)(2 - \kappa ^ *))\| \nabla \varphi \|_{0, \Omega}^2\\
& \geq  C( {\| \undertilde{y} \|_{0,\Omega}^2 + \| \varphi \|_{1, \Omega}^2} ),
\end{aligned}
$$
and given $s\in\tilde{H}^1(\Omega)$, set $\uy=-\nabla s$, and it follows that
$$
(\uy,-\nabla s)=\|\uy\|_{0,\Omega}\|\nabla s\|_{0,\Omega}\geqslant C\|\uy\|_{0,\Omega}\|s\|_{1,\Omega}.
$$
The inf-sup condition holds as
$$
\mathop {\inf }\limits_{ {s}  \in  {{\tilde H}^1}} \mathop {\sup }\limits_{( {\varphi ,\undertilde{y}} ) \in \tilde H_0^1 \times \uL^2 } \frac{{ b( {( {\varphi ,\undertilde{y}} ),s } )}}{{ {{ {{\| s \|_{1,\Omega}}}}} ( {{\| \undertilde{y} \|_{0,\Omega}} + {\| \varphi  \|_{1,\Omega}} } )}}\geq C>0.
$$
Therefore, given $(\varphi,\uy,r)\in V$, $T_V(\varphi,\uy,r)$ is uniquely defined by \eqref{eq:deftv}, and
$$
\|T_V(\varphi,\uy,r)\|_{V}\leqslant C(\|\varphi\|_{-1,\Omega}+\|r\|_{-1,\Omega}).
$$

Now, let $\{  {\varphi_{i} ,\undertilde{y}_{i},r_{i}}  \}$ be a bounded sequence in $V$, then there is subsequence $\{( \varphi_{{i}_{j}}, \undertilde{y}_{{i}_{j}}, r_{{i}_{j}} )\}$, such that $\{ \varphi_{{i}_{j}}\}$ and $\{ r_{{i}_{j}}\}$ are two Cauchy sequences in $L^{2}(\Omega)$. Therefore, $\{T_{V}(\varphi_{{i}_{j}},  \undertilde{y}_{{i}_{j}}, r_{{i}_{j}} )\}$
is a Cauchy sequence in $V$, which, further, has a limit therein. The proof is completed.
\end{proof}

The lemma below follows by the standard spectral theory of compact operators.

\begin{lemma}\cite{Riesz1916}
The eigenvalues of  $T_{V}$ and \eqref{3.4}, counting multiplicity,  can be listed in a (finite or infinite) sequence as
$$\mu_{1}\eqslantgtr \mu_{2}\eqslantgtr  \cdots  \eqslantgtr 0\ \ \ {\rm{and}}\ \ 0\eqslantless \lambda_{1}\eqslantless \lambda_{2}\eqslantless  \cdots ,$$
 respectively. Moreover, for any $i\in \mathbb{N}^{+}$ such that $\mu _{i}\neq 0$, it holds $\lambda _{i} \mu _{i}=1$.
\end{lemma}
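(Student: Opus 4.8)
The plan is to read \eqref{3.4} as the generalized eigenvalue problem attached to the symmetric pair $({a_V},{b_V})$ and then to transplant onto it the spectral theory of the compact operator $T_V$ furnished by Lemma \ref{lemma 3.2}. First I would observe that, testing the three scalar identities of \eqref{3.4} against $(\psi,0,0)$, $(0,\uz,0)$ and $(0,0,s)$ and adding them, the definitions of ${a_V}$ and ${b_V}$ are reproduced term by term, so \eqref{3.4} is equivalent to
\[
{a_V}( ( \varphi ,\uy,r ),( \psi ,\uz,s ) ) = \lambda\,{b_V}( ( \varphi ,\uy,r ),( \psi ,\uz,s ) ),\qquad \forall\,( \psi ,\uz,s )\in V .
\]
Abbreviating $U=(\varphi,\uy,r)$, this says $a_V(U,W)=\lambda\,b_V(U,W)$ for every $W\in V$.

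The second step is the reciprocity $\lambda\mu=1$ for the nonzero eigenvalues. By Lemma \ref{lemma 3.2} the saddle-point problem defining $T_V$ through \eqref{eq:deftv} is well posed, so ${a_V}$ is an isomorphism on $V$ and in particular nondegenerate. If $T_VU=\mu U$ with $\mu\neq0$, then \eqref{eq:deftv} yields $\mu\,{a_V}(U,W)={b_V}(U,W)$ for all $W$, i.e. $U$ solves the generalized problem with $\lambda=1/\mu$; conversely, from $a_V(U,W)=\lambda\,b_V(U,W)$ with $\lambda\neq0$ one gets $a_V(U-\lambda T_VU,W)=0$ for all $W$, hence $T_VU=(1/\lambda)U$ by nondegeneracy. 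The value $\lambda=0$ cannot occur, since it would force $a_V(U,\cdot)=0$ and therefore $U=0$. As this correspondence is the identity on eigenvectors, it identifies $\ker(T_V-\mu I)$ with the eigenspace of \eqref{3.4} at $\lambda=1/\mu$; applying the same identity to powers (equivalently, writing $T_V-\mu I=-\frac{1}{\lambda}A_V^{-1}(A_V-\lambda B_V)$ with $A_V,B_V$ the operators of $a_V,b_V$) shows the generalized eigenspaces correspond as well, so the pairing preserves algebraic multiplicities.

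Third, I would invoke the Riesz--Schauder theory. As $T_V$ is compact (Lemma \ref{lemma 3.2}), its spectrum off the origin consists of finitely or countably many eigenvalues of finite algebraic multiplicity with $0$ as the only possible accumulation point; in particular each band $\{\,|z|\geq\varepsilon\,\}$ holds only finitely many of them. Listing every eigenvalue as often as its multiplicity and sorting by $\eqslantgtr$ --- which orders first by decreasing modulus and then by increasing argument --- thus gives a well-defined sequence $\mu_1\eqslantgtr\mu_2\eqslantgtr\cdots\eqslantgtr0$ (the value $\mu=0$, i.e. the kernel of $T_V$, carries the eigenvalue ``$\lambda=\infty$'' and is simply omitted from the $\lambda$-list). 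Setting $\lambda_i=1/\mu_i$ and using the bijection above returns exactly the eigenvalues of \eqref{3.4}, with $\lambda_i\mu_i=1$.

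The step I expect to be the main obstacle is the final bookkeeping: reconciling the reciprocal bijection with the two tailor-made orderings, so that the $\eqslantgtr$-decreasing list of the $\mu_i$ transforms into an $\eqslantless$-increasing list $0\eqslantless\lambda_1\eqslantless\lambda_2\eqslantless\cdots$ with $\lambda_i\mu_i=1$ holding index by index. Reciprocation inverts moduli, converting decreasing modulus into increasing modulus (clause~2 of the order), and maps an argument $\theta\in(0,2\pi)$ to $2\pi-\theta$, reversing the argument order within a fixed modulus band as in clause~3; away from modulus coincidences between real and non-real eigenvalues this is a clean order anti-isomorphism. The genuinely delicate point --- worth flagging in this draft --- is precisely such a coincidence, where a positive real eigenvalue shares its modulus with a non-real one: there clause~3 must be checked by hand, and the index-wise identity $\lambda_i\mu_i=1$ need not survive the two independent sortings unless one either rules out these coincidences for $T_V$ or reads the statement as the existence of a multiplicity-preserving reciprocal bijection. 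Granting this, the two displayed sequences and $\lambda_i\mu_i=1$ follow, and everything else is the routine assembly of Lemma \ref{lemma 3.2} with the classical spectral theorem for compact operators.
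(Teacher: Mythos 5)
Your proposal is correct and follows essentially the paper's own route: the paper gives no written proof of this lemma, stating only that it ``follows by the standard spectral theory of compact operators'' and citing \cite{Riesz1916}, and your argument---summing the three equations of \eqref{3.4} into $a_V(U,W)=\lambda\, b_V(U,W)$, using the nondegeneracy of $a_V$ guaranteed by Lemma \ref{lemma 3.2} to set up the multiplicity-preserving bijection $\mu \leftrightarrow \lambda=1/\mu$ between the nonzero eigenvalues of $T_V$ and the eigenvalues of \eqref{3.4}, and then invoking Riesz--Schauder theory for the countable listing accumulating only at $0$---is precisely the standard elaboration that citation stands for. The ordering subtlety you flag (a positive real eigenvalue sharing its modulus with non-real ones, where reciprocation fails to be an order anti-isomorphism for the $\eqslantless$ convention) is a genuine but minor imprecision in the lemma's \emph{statement} rather than a gap in your proof, and your suggested reading of the index-wise identity $\lambda_i\mu_i=1$ as asserting a multiplicity-preserving reciprocal bijection is clearly the intended one.
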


\begin{theorem}\label{thm 3.1}
The eigenvalue problem \eqref{3.4} is equivalent to \eqref{2.4}.
\end{theorem}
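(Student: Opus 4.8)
The plan is to prove the equivalence by exhibiting an explicit, invertible correspondence between the eigenpairs of the two problems, under which a transmission eigenvalue $\lambda$ is preserved and the eigenfunction $\uu$ of \eqref{2.4} is matched with the triple $(\varphi,\uy,r)$ given by $\varphi=\nabla\cdot\uu$, $\uy=\lambda\uu$, and $r$ the scalar potential of $P\nabla\nabla\cdot\uu+(I+P)\uy$. Throughout, $\lambda\neq0$: for \eqref{2.4} this is because $\lambda=k^2$ with $k\neq0$, and for \eqref{3.4} it is automatic, since Lemma~\ref{lemma 3.2} (well-posedness of $T_V$) forces $a_V$ to be nondegenerate, so $0$ is not an eigenvalue. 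I will treat the two directions separately.

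\emph{Forward direction.} Given an eigenpair $(\lambda,\uu)$ of \eqref{2.4}, I define $\varphi,\uy,r$ as above. I first record the regularity needed to land in $V$: since $\uu\in H^1_0(\dv,\Omega)$ one has $\varphi=\dv\uu\in H^1_0(\Omega)$, and $(\varphi,1)=\langle\uu\cdot\nu,1\rangle=0$ gives $\varphi\in\tilde H_0^1(\Omega)$; the Helmholtz splitting \eqref{eq:hd}, together with the already-derived identity $(\nabla r)^\perp=0$, yields $P\nabla\nabla\cdot\uu+(I+P)\uy=\nabla r$ with $r\in\tilde H^1(\Omega)$. The manipulations leading to \eqref{3.3} then show that $(\varphi,\uy,r)$ satisfies \eqref{3.3} for all admissible test fields. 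Since every equation in \eqref{3.3} depends on the test field $\uv$ only through $\nabla\cdot\uv$, and since $\dv:H^1_0(\dv,\Omega)\to\tilde H_0^1(\Omega)$ is surjective (a Bogovskii-type right inverse produces, for any $\psi\in\tilde H_0^1(\Omega)$, a $\uv\in H^1_0(\dv,\Omega)$ with $\dv\uv=\psi$), I may replace $\nabla\cdot\uv$ by an arbitrary $\psi\in\tilde H_0^1(\Omega)$ and arrive at \eqref{3.4}. The triple is nontrivial because $\lambda\neq0$ forces $\uy=\lambda\uu\neq0$.

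\emph{Converse direction.} Let $(\lambda,(\varphi,\uy,r))$ solve \eqref{3.4} with $\lambda\neq0$, and set $\uu:=\uy/\lambda$. The decisive step is to upgrade the weak third equation $-(\uy,\nabla s)=\lambda(\varphi,s)$, $s\in\tilde H^1(\Omega)$, into strong statements. Testing with $s-\tfrac1{|\Omega|}(s,1)$ for $s\in C_c^\infty(\Omega)$ and using $\varphi\in L^2_0(\Omega)$ to discard the mean contribution, I obtain $\dv\uy=\lambda\varphi$ as an identity in $L^2(\Omega)$; integrating this by parts against general $s\in H^1(\Omega)$ and once more invoking $\varphi\in L^2_0(\Omega)$ shows $\uy\cdot\nu=0$ on $\partial\Omega$. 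Hence $\uy\in H_0(\dv,\Omega)$ with $\dv\uy=\lambda\varphi\in H^1_0(\Omega)$, so $\uu\in H^1_0(\dv,\Omega)$ and $\nabla\cdot\uu=\varphi$. The second equation of \eqref{3.4} gives $\nabla r=P\nabla\varphi+(I+P)\uy$, i.e. $\nabla r=P\nabla\nabla\cdot\uu+(I+P)\lambda\uu$. Finally, in the first equation I choose $\psi=\dv\uv$ for an arbitrary $\uv\in H^1_0(\dv,\Omega)$ (so $\psi\in\tilde H_0^1(\Omega)$), integrate by parts on the right to replace $(r,\dv\uv)$ by $-(\nabla r,\uv)$ and $(\varphi,\dv\uv)$ by $-(\nabla\varphi,\uv)$ (boundary terms vanish because $\uv\cdot\nu=0$ and $\varphi\in H^1_0(\Omega)$), and substitute the expressions just found. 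Collecting terms yields $(P(\nabla\nabla\cdot\uu+\lambda\uu),\nabla\nabla\cdot\uv)+\lambda((I+P)(\nabla\nabla\cdot\uu+\lambda\uu),\uv)=0$, which, by the matrix identity $A^{-1}P=I+P$ (equivalent to $P(A^{-1}-I)=I$), is precisely \eqref{2.4}. If $\uu=0$ then $\uy=0$, which forces $\varphi=0$ and $r=0$ in the remaining equations, contradicting nontriviality; hence $\uu\neq0$.

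I expect the main obstacle to be the first part of the converse, namely the rigorous passage from the weak identity posed on the mean-zero spaces $\tilde H^1(\Omega)$ and $L^2_0(\Omega)$ to the strong divergence $\dv\uy=\lambda\varphi$ and the homogeneous normal trace $\uy\cdot\nu=0$: one must insert and remove constants with care, using $\varphi\in L^2_0(\Omega)$ and the density of $\tilde H^1(\Omega)$ in $L^2_0(\Omega)$. Once $\uu\in H^1_0(\dv,\Omega)$ with $\nabla\cdot\uu=\varphi$ is secured, everything else is algebra and integration by parts justified by the boundary conditions encoded in $H^1_0(\dv,\Omega)$ and $\tilde H_0^1(\Omega)$, hinging on the commuting matrix identity $A^{-1}P=I+P$.
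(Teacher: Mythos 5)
Your proposal is correct and follows essentially the same route as the paper: the same correspondence $(\lambda,\uu)\leftrightarrow(\lambda,(\nabla\cdot\uu,\lambda\uu,r))$ with $\nabla r=P\nabla\nabla\cdot\uu+\lambda(I+P)\uu$, relying on the Helmholtz decomposition \eqref{eq:hd} to produce $r$ and on the identity $A^{-1}P=I+P$ to recover \eqref{2.4}. The only difference is one of detail: you explicitly justify the steps the paper asserts without proof (surjectivity of $\dv:H^1_0(\dv,\Omega)\to\tilde H^1_0(\Omega)$, upgrading the weak third equation to $\dv\uy=\lambda\varphi$ with $\uy\cdot\nu=0$, and ruling out $\lambda=0$ via Lemma~\ref{lemma 3.2}), which strengthens rather than alters the argument.
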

\begin{proof}
Let $(\lambda$, $(\varphi,\uy,r))$ be an eigenpair of \eqref{3.4}, then we have
$\uy\in H_0(\dv,\Omega)$, $\varphi=\nabla \cdot(\uy/\lambda)$ and $\nabla r={P\nabla\varphi+(I+P)}\uy. $ Set $\uu=\uy/\lambda$, and it follows that
\begin{equation*}
( {P( {\nabla \nabla  \cdot \uu + {\lambda}\undertilde{u}} ),( {\nabla \nabla  \cdot \undertilde{v} + {\lambda}A^{-1}\undertilde{v}} )} ) = 0, \ \forall \undertilde{v}  \in {H^1_0(\dv,\Omega)}.
\end{equation*}
Namely $(\lambda,\uy/\lambda)$ is an eigenpair of \eqref{2.4}.

On the other hand, if $(\lambda,\uu)$ is an eigenpair of \eqref{2.4}, then choose a unique $r\in\tilde{H}^1(\Omega)$ such that $\nabla r=P\nabla\nabla\cdot\uu+\lambda(I+P)\uu$, $(\nabla\cdot\uu,\lambda\uu,r)\in V$ and $(\lambda$ , $(\nabla\cdot\uu,\lambda\uu,r))$ solves \eqref{3.4}. The proof is completed.
\end{proof}

\subsection{case\ \uppercase\expandafter{\romannumeral2:
$\kappa_{\ast}>1$.}}\label{case2}
Define $Q = (A-I)^{-1}$. For an eigenpair $(\lambda, \uu)$ of \eqref{2.4}, similar to csae \uppercase\expandafter{\romannumeral1},  the following linear eigenvalue problem can be derived.

Find $(\lambda,({ {\varphi ,\undertilde{y},r} }))\in \mathbb{C}\times V$,
such that, for $( {\psi ,\undertilde{z},s} )\in V,$

\begin{equation}\label{3.40}
\left\{
\begin{aligned}
& ( {(I+Q)\nabla \varphi ,\nabla \psi } ) &+&  ( {Q\undertilde{y},\nabla \psi }) & &=  \lambda ( ( {\varphi ,\psi } )+ ( {r,\psi }) ) \\
&( {Q\nabla \varphi ,\undertilde{z}} ) &+ &( {Q\undertilde{y},\undertilde{z}} ) & - ( {\nabla r,\undertilde{z}} ) &= 0\\
&&-&( {\undertilde{y},\nabla s} )& &=  \lambda ( {\varphi ,s} ).
\end{aligned}
\right.
\end{equation}

\noindent The bilinear form is defined on $V$:
\begin{equation*}
\begin{aligned}
{\hat{a}_V}( {( {\varphi ,\undertilde{y},r} ),( {\psi ,\undertilde{z},s} )} ): =& ( {( {I+Q} )\nabla \varphi ,\nabla \psi } ) + ( {{Q}\undertilde{y},\nabla \psi } ) + ( {{Q}\nabla \varphi ,\undertilde{z}} ) + ( {{Q}\undertilde{y},\undertilde{z}} )- ( {\nabla r,\undertilde{z}} ) - ( {\undertilde{y},\nabla s} ),
\end{aligned}
\end{equation*}
then ${\hat{a}_{V}}$ is symmetric, continuous and bounded. Associated with ${\hat{a}_V}\left( { \cdot , \cdot } \right)$ and ${b_V}\left( { \cdot , \cdot } \right)$,  we define an operator ${\hat{T}_V}: V\rightarrow V$ by
\begin{equation*}
{\hat{a}_{V}(\hat{T}_{V}{( {\varphi ,\undertilde{y},r} ),( {\psi ,\undertilde{z},s})} )}={b_V( {( {\varphi ,\undertilde{y},r} ),( {\psi ,\undertilde{z},s})})}, \ \forall ( {\psi , \undertilde{z},s}) \in V.
\end{equation*}
\vspace{-1.2em}
\begin{lemma}\label{lemma 3.3}
$\hat{T}_{V}$ is well-defined and compact on $V$.
\end{lemma}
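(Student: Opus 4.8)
The plan is to reproduce, step for step, the architecture of the proof of Lemma~\ref{lemma 3.2}: I first verify that the saddle-point system defining $\hat{T}_V$ fits the Brezzi framework (coercivity of the $(\varphi,\uy)$-block on $\tilde{H}_0^1(\Omega)\times\uL^2(\Omega)$ together with the inf-sup condition for $b$), which simultaneously yields well-definedness and a negative-norm a priori bound, and then I deduce compactness from a compact-embedding argument. As a preliminary I would record the only structural fact that is special to this case: since $\kappa_{\ast}>1$, the matrix $A-I$ is symmetric positive definite with eigenvalues bounded below by $\kappa_{\ast}-1>0$, so $Q=(A-I)^{-1}$ is symmetric positive definite and satisfies $(Q\uz,\uz)\geq(\kappa^{\ast}-1)^{-1}\|\uz\|_{0,\Omega}^2$ for all $\uz\in\uL^2(\Omega)$.

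The main step, and the only one that genuinely differs from Case~\uppercase\expandafter{\romannumeral1}, is the coercivity of the block
$$\hat{a}((\varphi,\uy),(\varphi,\uy))=((I+Q)\nabla\varphi,\nabla\varphi)+(Q\uy,\nabla\varphi)+(Q\nabla\varphi,\uy)+(Q\uy,\uy).$$
Using the symmetry of $Q$, I would complete the square to rewrite this as $\|\nabla\varphi\|_{0,\Omega}^2+(Q(\nabla\varphi+\uy),\nabla\varphi+\uy)$. With the lower bound on $Q$ above, the right-hand side dominates $\|\nabla\varphi\|_{0,\Omega}^2+(\kappa^{\ast}-1)^{-1}\|\nabla\varphi+\uy\|_{0,\Omega}^2$, which, regarded pointwise as a quadratic form in the pair $(\nabla\varphi,\uy)$, has a coefficient matrix of positive determinant and is therefore positive definite; hence it bounds $\delta(\|\nabla\varphi\|_{0,\Omega}^2+\|\uy\|_{0,\Omega}^2)$ from below for some $\delta>0$. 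The Poincaré inequality on $\tilde{H}_0^1(\Omega)$ then upgrades $\|\nabla\varphi\|_{0,\Omega}^2$ to $\|\varphi\|_{1,\Omega}^2$, giving $\hat{a}((\varphi,\uy),(\varphi,\uy))\geq C(\|\varphi\|_{1,\Omega}^2+\|\uy\|_{0,\Omega}^2)$. I expect this completion-of-squares step to be the only delicate point: here the identity part sits on the $\nabla\varphi$-diagonal rather than on the $\uy$-diagonal as in Case~\uppercase\expandafter{\romannumeral1}, so the roles of $\varphi$ and $\uy$ are interchanged and the split must be arranged accordingly.

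The remaining ingredients are inherited unchanged from Case~\uppercase\expandafter{\romannumeral1}. The constraint form $b((\varphi,\uy),s)=-(\uy,\nabla s)$ is identical, so the same test choice $\uy=-\nabla s$ establishes the inf-sup condition, and all four forms are continuous and bounded as already noted. The Brezzi theory then guarantees that $\hat{T}_V(\varphi,\uy,r)$ is uniquely determined by its defining system and satisfies $\|\hat{T}_V(\varphi,\uy,r)\|_V\leq C(\|\varphi\|_{-1,\Omega}+\|r\|_{-1,\Omega})$. Finally, for compactness I would take a bounded sequence $\{(\varphi_i,\uy_i,r_i)\}$ in $V$; the compact embedding $H^1(\Omega)\hookrightarrow L^2(\Omega)$ lets me extract a subsequence along which $\{\varphi_{i_j}\}$ and $\{r_{i_j}\}$ are Cauchy in $L^2(\Omega)$, whence the negative-norm bound (since $\|\cdot\|_{-1,\Omega}\leq\|\cdot\|_{0,\Omega}$) forces $\{\hat{T}_V(\varphi_{i_j},\uy_{i_j},r_{i_j})\}$ to be Cauchy, hence convergent, in $V$. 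This proves $\hat{T}_V$ well-defined and compact.
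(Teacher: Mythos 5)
Your proposal is correct and follows exactly the architecture the paper intends: the paper's own proof of Lemma~\ref{lemma 3.3} is simply the statement that it is the same as that of Lemma~\ref{lemma 3.2}, and you reproduce that argument (coercivity of the principal block, inf-sup for $b$ via $\uy=-\nabla s$, the negative-norm a priori bound, and compactness via Rellich) with the details correctly adapted. In particular, your identification of the one step that genuinely changes --- the completion of squares $\hat{a}((\varphi,\uy),(\varphi,\uy))=\|\nabla\varphi\|_{0,\Omega}^2+(Q(\nabla\varphi+\uy),\nabla\varphi+\uy)$, with the identity now sitting on the $\nabla\varphi$-diagonal and $(Q\uz,\uz)\geq(\kappa^\ast-1)^{-1}\|\uz\|_{0,\Omega}^2$ replacing the $\hat{P}$-manipulation of Case~\uppercase\expandafter{\romannumeral1} --- is exactly the content the paper leaves implicit, and your verification of it is sound.
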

\begin{proof}
The proof is the same as that of Lemma \ref{lemma 3.2}.
\end{proof}
 \begin{theorem}\label{thm 3.2}
 The eigenvalue problem \eqref{3.40} is equivalent to \eqref{2.4}.
  \end{theorem}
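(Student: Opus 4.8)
The plan is to mirror the proof of Theorem \ref{thm 3.1} exactly, since the derivation of \eqref{3.40} from \eqref{2.4} in Case II proceeds by the same sequence of substitutions as in Case I, merely replacing $P=(A^{-1}-I)^{-1}$ by $Q=(A-I)^{-1}$ and adjusting the algebraic identities in the coupling terms. First I would take an eigenpair $(\lambda,(\varphi,\uy,r))$ of \eqref{3.40} and read off the structural consequences of the three equations. From the second equation (tested against arbitrary $\uz\in\uL^2(\Omega)$) one recovers the pointwise relation $\nabla r = Q\nabla\varphi + Q\uy$; from the third equation (tested against $\nabla s$ for $s\in\tilde H^1(\Omega)$) one obtains $\uy\in H_0(\dv,\Omega)$ together with $\nabla\cdot\uy=\lambda\varphi$, so that setting $\uu=\uy/\lambda$ gives $\varphi=\nabla\cdot\uu$. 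The first equation then encodes the weak form of the fourth order operator after eliminating $r$.

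The key step is to verify that, with these identifications, back-substitution produces precisely \eqref{2.4} with $P$ replaced by the Case II data. Here I would use the algebraic fact that for $\kappa_\ast>1$ the matrices $P$ and $Q$ are related through $A^{-1}-I$ and $A-I$, so that the bilinear form $\hat a_V$ plays the role that $a_V$ played in Case I; in particular the combination $Q\nabla\varphi+Q\uy$ is the analogue of $\nabla r = P\nabla\varphi+(I+P)\uy$ appearing in the Case I proof. Eliminating $r$ from the first equation of \eqref{3.40} using $\nabla r=Q\nabla\varphi+Q\uy$ and substituting $\varphi=\nabla\cdot\uu$, $\uy=\lambda\uu$ should collapse the system to
\begin{equation*}
( {Q( {\nabla \nabla  \cdot \uu + {\lambda}\undertilde{u}} ),( {\nabla \nabla  \cdot \undertilde{v} + {\lambda}A\undertilde{v}} )} ) = 0, \ \forall \undertilde{v}  \in {H^1_0(\dv,\Omega)},
\end{equation*}
which is the correct Case II variational form of \eqref{2.3} obtained by writing $(A^{-1}-I)^{-1}=-A(A-I)^{-1}$ up to the sign and factor absorbed into the test function, i.e.\ the $\kappa_\ast>1$ incarnation of \eqref{2.4}.

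For the converse I would start from an eigenpair $(\lambda,\uu)$ of \eqref{2.4}, set $\varphi=\nabla\cdot\uu$ and $\uy=\lambda\uu$, and then choose the unique $r\in\tilde H^1(\Omega)$ with $\nabla r=Q\nabla\varphi+Q\uy$; the normalization $r\in L^2_0(\Omega)$ fixes $r$ uniquely and guarantees $(\varphi,\uy,r)\in V$. Checking that this triple satisfies all three equations of \eqref{3.40} is then a direct computation, using $\nabla\cdot\uu=\varphi$ in the third equation and the defining relation for $\nabla r$ in the second.

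The main obstacle I anticipate is purely algebraic rather than analytic: one must confirm that the replacement of $P$ by $Q$ is consistent with the correct appearance of $A$ (not $A^{-1}$) in the test-function factor of the Case II variational form, and that the identity relating $(A^{-1}-I)^{-1}$ and $(A-I)^{-1}$ is applied with the right sign so that $\hat a_V$ is genuinely the symmetric form induced by \eqref{2.3}. Since Lemma \ref{lemma 3.3} already grants that $\hat T_V$ is well-defined and compact, no new functional-analytic input is needed; the Helmholtz decomposition \eqref{eq:hd} used to kill the component $(\nabla r)^\perp$ in the derivation carries over verbatim, so the equivalence reduces to matching the two bilinear forms term by term.
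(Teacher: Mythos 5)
Your proof skeleton --- read off $\nabla r=Q(\nabla\varphi+\uy)$ from the second equation, $\uy\in H_0(\dv,\Omega)$ with $\dv\uy=\lambda\varphi$ from the third, set $\uu=\uy/\lambda$ and back-substitute, then reverse the construction via the Helmholtz decomposition --- is exactly the paper's intended argument (its proof of Theorem \ref{thm 3.2} is literally ``the same as that of Theorem \ref{thm 3.1}''). But your key algebraic identification is wrong, and the error is not cosmetic. Theorem \ref{thm 3.2} asserts equivalence of \eqref{3.40} with \eqref{2.4} \emph{itself}, i.e.\ with the form involving $P=(A^{-1}-I)^{-1}$ and $\lambda A^{-1}\uv$; there is no separate ``$\kappa_\ast>1$ incarnation'' to aim at. Moreover, your proposed target
\begin{equation*}
( Q( \nabla\nabla\cdot\uu+\lambda\uu ), \nabla\nabla\cdot\uv+\lambda A\uv ) = 0
\end{equation*}
is not a correct rewriting of \eqref{2.4}: from $P=-A(A-I)^{-1}=-AQ$ one gets
\begin{equation*}
( P(\nabla\nabla\cdot\uu+\lambda\uu), \nabla\nabla\cdot\uv+\lambda A^{-1}\uv )
=-( Q(\nabla\nabla\cdot\uu+\lambda\uu), A\nabla\nabla\cdot\uv+\lambda\uv ),
\end{equation*}
so the factor $A$ sits on $\nabla\nabla\cdot\uv$, not on $\uv$, and it cannot be ``absorbed into the test function'': $H_0^1(\dv,\Omega)$ is not invariant under multiplication by $A$ (for $\uv\in H_0^1(\dv,\Omega)$ one has in general neither $\dv(A\uv)\in H_0^1(\Omega)$ nor $(A\uv)\cdot\nu=0$ on $\partial\Omega$). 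Concretely, if you expand what \eqref{3.40} actually encodes after the substitutions $\varphi=\nabla\cdot\uu$, $\uy=\lambda\uu$, $\nabla r=Q(\nabla\varphi+\uy)$, and compare with your display, the two differ by $(\nabla\nabla\cdot\uu,\nabla\nabla\cdot\uv)-\lambda^2(\uu,\uv)$, which does not vanish identically; your display is a genuinely different eigenvalue problem, so even a complete proof of equivalence with it would not prove the theorem.

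The repair needs no new variational form. Use the identities $AQ=I+Q=-P$, equivalently $I+P=-Q$ and $A^{-1}P=-Q$. Substituting $\varphi=\nabla\cdot\uu$, $\uy=\lambda\uu$ into the first equation of \eqref{3.40} with $\psi=\nabla\cdot\uv$, replacing $\lambda(r,\nabla\cdot\uv)$ by $-\lambda(\nabla r,\uv)=-\lambda(Q\nabla\nabla\cdot\uu+Q\uy,\uv)$ and $\lambda(\nabla\cdot\uu,\nabla\cdot\uv)$ by $-\lambda(\nabla\nabla\cdot\uu,\uv)$, you land exactly on \eqref{2.4}, provided you also use the symmetry $(\nabla\nabla\cdot\uu,\uv)=(\uu,\nabla\nabla\cdot\uv)=-(\nabla\cdot\uu,\nabla\cdot\uv)$; this last integration-by-parts identity is precisely what resolves the apparent mismatch of terms that worried you. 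Alternatively, and most economically: check that $(\lambda,(\varphi,\uy,r))$ solves \eqref{3.40} if and only if $(\lambda,(\varphi,\uy,-r-\varphi))$ solves \eqref{3.4} --- indeed $\nabla(-r-\varphi)=-(I+Q)\nabla\varphi-Q\uy=P\nabla\varphi+(I+P)\uy$, and the first equations match after using the third equation to write $(\uy,\nabla\psi)=-\lambda(\varphi,\psi)$ --- and then invoke Theorem \ref{thm 3.1} directly.
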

\begin{proof}
The proof is the same as that of Theorem \ref{thm 3.1}.
\end{proof}
\section{ The discretization scheme of transmission eigenvalue problem for anisotropic media}\label{sec3}
For simplicity, only case \uppercase\expandafter{\romannumeral1} is discussed below, since case \uppercase\expandafter{\romannumeral2} is very similar. Let $\{{\mathcal{T}_h}\}$ be a shape regular triangular mesh in 2D or tetrahedral subdivision in 3D of $\Omega$, such that $\bar \Omega = { \cup _{K \in {\mathcal{T}_h}}}\bar K$. Accordingly, the finite element spaces are defined as follows:

\noindent $\bullet$\  \ $\mathcal{L}_{h}$ denotes the linear element space on $\Omega$, and we further define $$\tilde{\mathcal{L}}_{h} := \mathcal{L}_{h}\cap L_0^2(\Omega), \ \mathcal{L}_{h0} := \mathcal{L}_{h}\cap H_0^1(\Omega) \ \ {\rm{and}}\ \ {\tilde{\mathcal{L}}}_{h0} := {\mathcal{L}_{h0}} \cap L_0^2( \Omega ).$$
$\bullet$\ \ $\undertilde{\mathcal{C}}_{h} := (\mathcal{C}_{h})^{d}$ is the vertorial piecewise constant finite element space on $\Omega$.

The discretized mixed eigenvalue problem takes the form: find $\lambda_{h} \in  \mathbb{C}$ and $( {\varphi_{h} ,\uy_{h},r_{h}} )\in V_{h}:=\tilde{\mathcal{L}}_{h0}\times \undertilde{\mathcal{C}}_{h} \times \tilde{\mathcal{L}}_{h}$, such that, for $ ( {\psi_{h} ,\uz_{h},s_{h}} )\in V_{h},$
\begin{equation}\label{3.1}
\left\{
\begin{aligned}
& ( {P\nabla \varphi_{h} ,\nabla \psi_{h} } ) &+&  ( {P\uy_{h},\nabla \psi_{h} } ) & &=  \lambda_{h} ( ( {\varphi_{h} ,\psi_{h} } ) + ( {r_{h},\psi_{h} } )) \\
&( {P\nabla \varphi_{h} ,\uz_{h}} ) &+ &( {( {I + P} )\uy_{h},\uz_{h}} ) & - ( {\nabla r_{h},\uz_{h}} ) &= 0\\
&&-&( {\uy_{h},\nabla s_{h}} )& &=  \lambda_{h}( {\varphi_{h} ,s_{h}} ).
\end{aligned}
\right.
\end{equation}

We propose the following lemma for the well-posedness of the discretized problem \eqref{3.1}.

\begin{lemma}\label{lemma 4.1} There exists a constant $C$, uniformly with respect to $V_h$, such that
\end{lemma}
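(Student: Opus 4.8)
The plan is to establish the two Brezzi conditions for the discrete saddle-point system \eqref{3.1}, uniformly in $h$: a coercivity estimate for $a(\cdot,\cdot)$ and a discrete inf-sup condition for $b(\cdot,\cdot)$. I expect the lemma to assert precisely the discrete counterparts of the two displays in the proof of Lemma~\ref{lemma 3.2}, namely
\begin{equation*}
a((\varphi_h,\uy_h),(\varphi_h,\uy_h))\ge C\big(\|\varphi_h\|_{1,\Omega}^2+\|\uy_h\|_{0,\Omega}^2\big)
\end{equation*}
for all $(\varphi_h,\uy_h)\in\tilde{\mathcal{L}}_{h0}\times\undertilde{\mathcal{C}}_h$, together with
\begin{equation*}
\inf_{s_h\in\tilde{\mathcal{L}}_h}\ \sup_{(\varphi_h,\uy_h)\in\tilde{\mathcal{L}}_{h0}\times\undertilde{\mathcal{C}}_h}\frac{b((\varphi_h,\uy_h),s_h)}{\|s_h\|_{1,\Omega}\big(\|\uy_h\|_{0,\Omega}+\|\varphi_h\|_{1,\Omega}\big)}\ge C>0.
\end{equation*}
Once these are in hand, the standard Brezzi theory yields the well-posedness of \eqref{3.1}.

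For the coercivity I would simply reuse the computation from Lemma~\ref{lemma 3.2}. The decomposition of $a((\varphi,\uy),(\varphi,\uy))$ in terms of $\hat{P}=P+\kappa^*I$ is an identity valid pointwise in $x$, hence it holds for any pair $(\varphi_h,\uy_h)$ drawn from the conforming spaces. Since $\tilde{\mathcal{L}}_{h0}\subset H_0^1(\Omega)$, the very same Poincar\'e inequality applies, so the lower bound transfers verbatim with the identical constant. In fact coercivity holds on the whole of $\tilde{\mathcal{L}}_{h0}\times\undertilde{\mathcal{C}}_h$, so no restriction to the discrete kernel of $b(\cdot,\cdot)$ is needed.

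The discrete inf-sup condition is where the choice of elements is decisive, and here I would exploit the structural inclusion $\nabla\tilde{\mathcal{L}}_h\subseteq\undertilde{\mathcal{C}}_h$: the gradient of a continuous piecewise-linear function is a piecewise-constant vector field, so it already lives in the discrete space. This makes the continuous-level test-function choice from Lemma~\ref{lemma 3.2} admissible at the discrete level. Given $s_h\in\tilde{\mathcal{L}}_h$, I would take $\varphi_h=0$ and $\uy_h=-\nabla s_h\in\undertilde{\mathcal{C}}_h$, so that $b((\varphi_h,\uy_h),s_h)=\|\nabla s_h\|_{0,\Omega}^2$ while $\|\uy_h\|_{0,\Omega}+\|\varphi_h\|_{1,\Omega}=\|\nabla s_h\|_{0,\Omega}$. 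The quotient then collapses to $\|\nabla s_h\|_{0,\Omega}/\|s_h\|_{1,\Omega}$, which is bounded below by a positive $h$-independent constant through the Poincar\'e inequality on the zero-mean space $\tilde{\mathcal{L}}_h\subset\tilde{H}^1(\Omega)$.

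The main obstacle in a generic mixed discretization is exactly the uniform discrete inf-sup estimate, but here it is sidestepped entirely by pairing piecewise-linear potentials with piecewise-constant vector fields so that $\nabla\tilde{\mathcal{L}}_h\subseteq\undertilde{\mathcal{C}}_h$. The only residual points needing care are bookkeeping: verifying that $\nabla s_h$ genuinely lies in the admissible test space, and checking that the zero-mean constraints cutting out $\tilde{\mathcal{L}}_h$ and $\tilde{\mathcal{L}}_{h0}$ do not interfere with the construction, which they do not since gradients are insensitive to additive constants. Assembling the coercivity and inf-sup bounds and invoking Brezzi's theorem would then complete the proof.
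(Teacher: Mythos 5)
Your proposal is correct and matches the paper's own proof essentially verbatim: the paper establishes coercivity of $a(\cdot,\cdot)$ on $\tilde{\mathcal{L}}_{h0}\times\undertilde{\mathcal{C}}_h$ by the same $\hat{P}=P+\kappa^*I$ decomposition and Poincar\'e inequality, and proves the discrete inf-sup for $b(\cdot,\cdot)$ by the same choice $\uy_h=-\nabla s_h\in\undertilde{\mathcal{C}}_h$, exploiting exactly the inclusion $\nabla\tilde{\mathcal{L}}_h\subseteq\undertilde{\mathcal{C}}_h$ you identified, before concluding the full inf-sup condition \eqref{4.2} by standard saddle-point theory.
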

\begin{equation}\label{4.2}
\mathop {\inf }\limits_{( {\varphi_{h} ,\undertilde{y}_{h},r_{h}} ) \in V_{h}} \mathop {\sup }\limits_{( {\psi_{h} ,\undertilde{z}_{h},s_{h}} ) \in V_{h}} \frac{{a_{V}( {( {\varphi_{h} ,\undertilde{y}_{h},r_{h}} ),( {\psi_{h} ,\undertilde{z}_{h},s_{h}} )} )}}{{{{\| {( \varphi_{h} ,\undertilde{y}_{h},r_{h}} )} \|_{V}}{{\| {( \psi_{h} ,\undertilde{z}_{h},s_{h}} )}\|_{V}}}} \ge C > 0.
\end{equation}
\begin{proof}
Again, by the Poincar\'e inequality, the following two inequalities can be derived.
$$
\begin{aligned}
a( {( {\varphi_{h} ,\undertilde{y}_{h}} ) ,( {\varphi_{h} ,\undertilde{y}_{h}} )} ) &=\| {{{\hat{P}}^{ - \frac{1}{2}}}P\nabla \varphi_{h}  + {{\hat{P}}^{\frac{1}{2}}}\undertilde{y}_{h}} \|_{0,\Omega}^2 + (1 - {\kappa ^ * })\| \undertilde{y}_{h} \|_{0,\Omega}^2 + ((P - P{\hat{P}^{ - 1}}P)\nabla \varphi_{h} ,\nabla \varphi_{h} )\\
 & \geq  (1 - {\kappa ^ * })\| \undertilde{y}_{h} \|_{0,\Omega}^2 + \kappa_*(1-\kappa ^ *)/((1-\kappa_*)(2 - \kappa ^ *))\| \nabla \varphi_{h} \|_{0, \Omega}^2\\
& \geq  C( {\| \undertilde{y}_{h} \|_{0,\Omega}^2 + \| \varphi_{h} \|_{1, \Omega}^2} ),
\end{aligned}
$$
and given $s_{h}\in\tilde{\mathcal{L}}_{h}$, set $\uy_{h}=-\nabla s_{h}$, and it follows that
$$
(\uy_{h},-\nabla s_{h})=\|\uy_{h}\|_{0,\Omega}\|\nabla s_{h}\|_{0,\Omega}\geqslant C\|\uy_{h}\|_{0,\Omega}\|s_{h}\|_{1,\Omega}.
$$
The inf-sup condition holds as
$$
\mathop {\inf }\limits_{ {s}_{h}  \in  \tilde{\mathcal{L}}_{h} } \mathop {\sup }\limits_{( {\varphi_{h} ,\undertilde{y}_{h}} ) \in \tilde{\mathcal{L}}_{h0}\times \undertilde{\mathcal{C}}_{h}} \frac{{ b( {( {\varphi_{h} ,\undertilde{y}_{h}} ),s_{h} } )}}{{ {{ {{\| s_{h} \|_{1,\Omega}}}}} ( {{\| \undertilde{y}_{h} \|_{0,\Omega}} + {\| \varphi_{h}  \|_{1,\Omega}} } )}}\geq C>0.
$$
The proof is completed.
\end{proof}

Associated with $a_{V}(\cdot, \cdot)$ and $b_{V}(\cdot, \cdot)$, we define operator $T_{V_{h}}$, for $ ( {\psi_{h} ,\undertilde{z}_{h},s_{h}} ) \in V_{h}$,
$${a_V}( {{T_{V_{h}}}( {\varphi ,\undertilde{y},r} ),( {\psi_{h} ,\undertilde{z}_{h},s_{h}} )} ) = {b_V}( {( {\varphi ,\undertilde{y},r} ),( {\psi_{h} ,\undertilde{z}_{h},s_{h}} )} ),$$
and operator $S\hspace{-0.07cm}_{V_{h}}$, for $ ( {\psi_{h} ,\undertilde{z}_{h},s_{h}} ) \in V_{h}$,
$${a_V}( {{S\hspace{-0.07cm}_{V_h}}( {\varphi ,\undertilde{y},r} ),( {\psi_{h} ,\undertilde{z}_{h},s_{h}})} ) = {a_V}( {( {\varphi ,\undertilde{y},r} ),( {\psi_{h} ,\undertilde{z}_{h},s_{h}})} ).$$
By the standard theory of finite element method, we have the following results.
\begin{lemma}\label{lemma 4.2} With the stable condition \eqref{4.2},\\
1. $S\hspace{-0.07cm}_{V_h}$ is a well-defined idempotent operator from $V$ onto $V_h$;\\
2. The approximation holds:
$${\| {{S\hspace{-0.07cm}_{V_h}}( {\varphi ,\undertilde{y},r} ) - ( {\varphi ,\undertilde{y},r} )} \|_V} \le C\mathop {\inf }\limits_{( {{\psi _h},{\undertilde{z}_h},{s_h}} ) \in {V_h}} {\| {( {\varphi ,\undertilde{y},r} ) - ( {\psi_{h} ,\undertilde{z}_{h},s_{h}} )} \|_{{V}}};$$
3. If ${\| {{S\hspace{-0.07cm}_{V_h}}( {\varphi ,\undertilde{y},r} ) - ( {\varphi ,\undertilde{y},r} )} \|_V} \to 0$ as $h \to 0$ for any $( {\varphi ,\undertilde{y},r} ) \in V$, then $\| {{T_{V_h}} - T_{V}} \| \to 0$ as $h \to 0;$
4. The operator $T_{V_h}$ is well-defined and compact on $V_h\subset V$.
\end{lemma}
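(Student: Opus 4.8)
The plan is to recognize that $S_{V_h}$ is nothing but the Galerkin (Ritz-type) projection onto $V_h$ induced by the symmetric form $a_V$, and to treat the four claims in the order they are stated, reserving the operator-norm convergence of item~3 as the genuinely substantive step.

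First, for item~1, I would deduce well-posedness of the defining relation directly from the discrete inf-sup condition \eqref{4.2}. Since the trial and test spaces coincide and $V_h$ is finite-dimensional, the linear system defining $S_{V_h}(\varphi,\uy,r)$ is square; condition \eqref{4.2} forces the associated matrix to be injective, hence invertible, so $S_{V_h}(\varphi,\uy,r)\in V_h$ exists and is unique for every $(\varphi,\uy,r)\in V$. Idempotency is then immediate: if $(\varphi,\uy,r)\in V_h$ it already satisfies the defining identity $a_V(S_{V_h}(\varphi,\uy,r),\cdot)=a_V((\varphi,\uy,r),\cdot)$ on $V_h$, so by uniqueness $S_{V_h}$ fixes $V_h$ pointwise; as its range lies in $V_h$, this yields $S_{V_h}^2=S_{V_h}$ and surjectivity onto $V_h$. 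The same square-system argument, applied to the functional $b_V((\varphi,\uy,r),\cdot)$ on $V_h$, settles item~4: $T_{V_h}$ is well-defined, and compactness is free because its range sits in the finite-dimensional space $V_h$, making it of finite rank.

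For item~2 I would use the Galerkin orthogonality built into the definition of $S_{V_h}$, namely $a_V((\varphi,\uy,r)-S_{V_h}(\varphi,\uy,r),w_h)=0$ for all $w_h\in V_h$. Given any competitor $(\psi_h,\uz_h,s_h)\in V_h$, I bound $\|S_{V_h}(\varphi,\uy,r)-(\psi_h,\uz_h,s_h)\|_V$ by applying \eqref{4.2} to this element of $V_h$, replacing $a_V(S_{V_h}(\varphi,\uy,r),w_h)$ by $a_V((\varphi,\uy,r),w_h)$ via orthogonality, and then invoking continuity of $a_V$; a triangle inequality gives $\|S_{V_h}(\varphi,\uy,r)-(\varphi,\uy,r)\|_V\le(1+M/C)\|(\varphi,\uy,r)-(\psi_h,\uz_h,s_h)\|_V$, with $M$ the continuity constant of $a_V$, and taking the infimum over $V_h$ proves the estimate. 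As a by-product, the competitor $(\psi_h,\uz_h,s_h)=0$ shows $\|S_{V_h}\|$ is bounded uniformly in $h$, a fact I will need below.

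The crux is item~3, and the first move is the commuting identity $T_{V_h}=S_{V_h}T_V$. This holds because, for every $w_h\in V_h$, both $a_V(T_{V_h}(\varphi,\uy,r),w_h)$ and $a_V(T_V(\varphi,\uy,r),w_h)$ equal $b_V((\varphi,\uy,r),w_h)$ (the latter since the defining identity of $T_V$ holds on all of $V\supset V_h$); comparing with the definition of $S_{V_h}$ gives the identity. Hence $T_{V_h}-T_V=(S_{V_h}-I)T_V$, and the task reduces to $\|(S_{V_h}-I)T_V\|\to 0$. Here I invoke the compactness of $T_V$ from Lemma~\ref{lemma 3.2}: the image of the unit ball under $T_V$ is relatively compact, hence admits a finite $\varepsilon$-net $g_1,\dots,g_N$. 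Combining the uniform bound on $\|S_{V_h}-I\|$ from item~2 with the hypothesis that $(S_{V_h}-I)g_j\to 0$ for each fixed $g_j$, a standard net argument upgrades the assumed strong convergence to uniform convergence on the relatively compact set $T_V(B)$, giving $\|(S_{V_h}-I)T_V\|\to 0$. The main obstacle is precisely this passage from pointwise to operator-norm convergence; it is exactly where compactness of $T_V$ is indispensable, since $S_{V_h}\to I$ only strongly and never in norm.
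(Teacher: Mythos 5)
Your proposal is correct, and it coincides with the paper's approach: the paper gives no written proof of this lemma, appealing instead to ``the standard theory of finite element method,'' and your argument is precisely that standard theory --- unique solvability of the square Galerkin system from the uniform inf-sup condition \eqref{4.2}, quasi-optimality via Galerkin orthogonality, the identity $T_{V_h}=S\hspace{-0.07cm}_{V_h}T_V$, and the upgrade from strong to norm convergence through compactness of $T_V$ and a finite $\varepsilon$-net. No gaps; the only implicit ingredient you rely on (uniformity in $h$ of the constant in \eqref{4.2}) is exactly what Lemma \ref{lemma 4.1} supplies.
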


\begin{lemma}  Let $\{T_{V_{h}}\}$ be a family of compact operators on $V$, such that $\| {{T_{V_{h}}} - T_{V}} \| _{V \to V}\to 0$ as $h \to 0$. For any $i\in \mathbb{N}^{+}$, it follows that the eigenvalues of  $T_{V_{h}}$ can be ordered as
$$\mu _{1,h} \eqslantgtr \mu _{2,h} \eqslantgtr  \cdots  \eqslantgtr 0 \ \ {\rm{with}}\ \ \mathop {\lim }\limits_{h \to 0} \mu _{i,h} = {\mu _i},
$$
and the eigenvalues of \eqref{3.1} can be ordered as
$$0 \eqslantless \lambda _{1,h} \eqslantless \lambda _{2,h} \eqslantless  \cdots \ \ {\rm{with}}\  \ \mathop {\lim }\limits_{h \to 0} \lambda _{i,h} = {\lambda _i}.
$$
Moreover, if $\mu _{i,h}\neq 0$, it holds that $\lambda _{i,h} \mu _{i,h}=1$.
\end{lemma}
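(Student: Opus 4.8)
The plan is to place the statement within the classical theory of spectral approximation of compact operators under operator-norm convergence, and then reconcile that theory with the complex total order $\eqslantgtr$ fixed in the Notations. By Lemma~\ref{lemma 3.2} the operator $T_V$ is compact, and by Lemma~\ref{lemma 4.2} each $T_{V_h}$ is compact with $\|T_{V_h}-T_V\|_{V\to V}\to 0$; hence $\sigma(T_V)$ consists of $0$ together with an at most countable set of nonzero eigenvalues of finite algebraic multiplicity accumulating only at $0$, and these are enumerated by $\eqslantgtr$ as $\mu_1\eqslantgtr\mu_2\eqslantgtr\cdots\eqslantgtr 0$. The task then reduces to showing that each such $\mu_i$ is approximated, with correct multiplicity, by eigenvalues of $T_{V_h}$, and to transporting this to the reciprocal quantities $\lambda_{i,h}$.

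First I would fix a nonzero eigenvalue $\mu_i$ of algebraic multiplicity $m$ and choose a circle $\Gamma\subset\mathbb{C}$ enclosing $\mu_i$ and no other point of $\sigma(T_V)$. For $h$ small the norm convergence guarantees that $\Gamma$ also avoids $\sigma(T_{V_h})$, so the Riesz spectral projections
$$
E=\frac{1}{2\pi i}\oint_{\Gamma}(z-T_V)^{-1}\,dz,\qquad
E_h=\frac{1}{2\pi i}\oint_{\Gamma}(z-T_{V_h})^{-1}\,dz
$$
are both well defined. Using the resolvent identity together with the uniform bound of $\|(z-T_V)^{-1}\|$ on the compact set $\Gamma$, the hypothesis $\|T_{V_h}-T_V\|\to 0$ yields $\|(z-T_{V_h})^{-1}-(z-T_V)^{-1}\|\to 0$ uniformly in $z\in\Gamma$, whence $\|E_h-E\|\to 0$. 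Since two projections close in norm have equal rank, for $h$ small we obtain $\operatorname{rank}E_h=\operatorname{rank}E=m$; that is, counting algebraic multiplicity, exactly $m$ eigenvalues of $T_{V_h}$ lie inside $\Gamma$. As the radius of $\Gamma$ may be taken arbitrarily small, these $m$ eigenvalues all converge to $\mu_i$.

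Next I would transfer this cluster-wise statement to the ordered sequences. For each fixed $i$ there are only finitely many eigenvalues of $T_V$ of modulus at least $|\mu_i|$, so applying the previous step to each of them (and, when $\mu_i$ is a repeated value, to the full cluster sharing its modulus and argument) pins down the position of $\mu_{i,h}$ under the $\eqslantgtr$ enumeration: for $h$ small the first $i$ eigenvalues of $T_{V_h}$ in $\eqslantgtr$ order are exactly those captured by the projections around $\mu_1,\dots,\mu_i$, and each converges to the corresponding $\mu_j$. This delivers $\lim_{h\to0}\mu_{i,h}=\mu_i$ with $\mu_{1,h}\eqslantgtr\mu_{2,h}\eqslantgtr\cdots\eqslantgtr 0$. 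I expect this to be the main obstacle, since the eigenvalues may be genuinely complex and may share moduli, so matching the analytic, cluster-wise convergence from the projections with the specific total order $\eqslantgtr$ requires convergence in both modulus and argument and a careful exhaustion of the finitely many eigenvalues of modulus $\geq|\mu_i|$.

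Finally, the eigenpairs of \eqref{3.1} relate to those of $T_{V_h}$ exactly as in the continuous case: if $(\lambda_h,u_h)$ solves \eqref{3.1} with $\lambda_h\neq 0$, the defining relation of $T_{V_h}$ gives $T_{V_h}u_h=\lambda_h^{-1}u_h$, so the nonzero eigenvalues of $T_{V_h}$ are precisely the reciprocals $\mu_{i,h}=\lambda_{i,h}^{-1}$, and the ordering $0\eqslantless\lambda_{1,h}\eqslantless\lambda_{2,h}\eqslantless\cdots$ is the image of $\mu_{1,h}\eqslantgtr\mu_{2,h}\eqslantgtr\cdots$ under $\mu\mapsto 1/\mu$, which reverses $\eqslantgtr$ into $\eqslantless$ on $\mathbb{C}\setminus\{0\}$. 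The convergence $\lambda_{i,h}\to\lambda_i$ then follows from $\mu_{i,h}\to\mu_i$ by continuity of inversion away from $0$, and $\lambda_{i,h}\mu_{i,h}=1$ holds whenever $\mu_{i,h}\neq 0$, completing the plan.
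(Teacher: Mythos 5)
Your proposal is correct, but it is worth noting that the paper offers no proof of this lemma at all: it is stated as a direct consequence of ``the standard theory of finite element method'' (via items 3--4 of Lemma \ref{lemma 4.2}, which give compactness of $T_{V_h}$ and $\|T_{V_h}-T_V\|\to 0$), just as the continuous counterpart is dispatched by citing Riesz. What you have written is precisely the classical argument (Anselone/Kato/Osborn) that this invocation stands on: uniform resolvent convergence on a contour $\Gamma$ around a nonzero eigenvalue, norm convergence of the Riesz projections $E_h\to E$, rank stability of nearby projections giving multiplicity-preserving cluster convergence, and the identification $T_{V_h}u_h=\lambda_h^{-1}u_h$ for discrete eigenpairs of \eqref{3.1} (which, as you use implicitly, requires the discrete inf-sup condition \eqref{4.2} so that $T_{V_h}$ restricted to $V_h$ is characterized by testing against $V_h$). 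So the comparison is not ``same route vs.\ different route'' but ``asserted vs.\ proved'': your write-up supplies the content the paper leaves implicit.

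One caveat, which you partially flag yourself as the main obstacle: the step from cluster-wise convergence to the statement $\lim_{h\to 0}\mu_{i,h}=\mu_i$ under the total order $\eqslantgtr$ is not airtight as written, and in fact cannot be, because the packaged statement itself has edge cases. If $T_V$ has two \emph{distinct} eigenvalues of equal modulus, the discrete approximants' moduli can oscillate in relative size as $h\to 0$, so the $i$-th entry of the $\eqslantgtr$-ordered discrete sequence may alternate between approximants of different limits; similarly, the claim that $\mu\mapsto 1/\mu$ reverses $\eqslantgtr$ into $\eqslantless$ fails for equal moduli when one eigenvalue has argument $0$, so the pairing $\lambda_{i,h}\mu_{i,h}=1$ under \emph{independent} orderings of the two sequences can break there. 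These defects belong to the paper's ordering convention (they equally afflict the unproved continuous lemma), not to your argument, and they are vacuous for the real eigenvalues actually computed; but a fully rigorous version should either assume the nonzero eigenvalues have pairwise distinct moduli (or are real), or state the conclusion cluster-wise, as your Riesz-projection argument naturally does.
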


Let ${\lambda _1} \eqslantless {\lambda _2} \eqslantless   \cdots$ be the eigenvalues of \eqref{3.4} and ${\lambda _{1,h}} \eqslantless {\lambda _{2,h}} \eqslantless  \cdots $
be the eigenvalues of \eqref{3.1}, respectively. Denote by $N(\lambda_{i})$ and $N_{h}(\lambda_{i,h})$ the eigenspaces of \eqref{3.4} and \eqref{3.1} associated with $\lambda_{i}$ and $\lambda_{i,h}$, respectively. The gap between $N({\lambda _i})$ and ${N_h}(\lambda _{i,h})$ is defined as
$$\hat \delta (N({\lambda _i}),{N_h}(\lambda _{i,h})) := \max (\delta (N({\lambda _i}),{N_h}(\lambda _{i,h})),\delta ({N_h}(\lambda _{i,h}),N({\lambda _i}))),
$$
with $\delta (N({\lambda _i}),{N_h}(\lambda _{i,h})):= \mathop {\sup }\limits_{\zeta \in N({\lambda _i}), \| \zeta \| = 1} {\rm{dist}}(\zeta,{N_h}(\lambda _{i,h}))$. An abstract estimation holds as below.

\begin{lemma} For any $i\in \mathbb{N}^{+}$, there exists a constant $C$ independent of $h$, such that for a small enough $h$,
$$\hat \delta (N({\lambda _i}),{N_h}(\lambda _{i,h})) \le C\left\| {(I_{V} - {S\hspace{-0.07cm}_{V_h}}){|_{N({\lambda _i})}}} \right\|,
$$
where $I_{V}$ is the identity operator on $V$.
\end{lemma}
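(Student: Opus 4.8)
\emph{Proof proposal.} The plan is to reduce the claim to the classical spectral approximation theory for norm-convergent families of compact operators, once the purely algebraic link between $T_{V_h}$, $T_V$ and $S_{V_h}$ has been made explicit.

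First I would record the operator identity $T_{V_h}=S_{V_h}T_V$ on $V$. Indeed, for any $(\varphi,\uy,r)\in V$ and any $(\psi_h,\uz_h,s_h)\in V_h$, the defining relation of $S_{V_h}$ together with that of $T_V$ (the latter tested against $(\psi_h,\uz_h,s_h)\in V_h\subset V$) gives
\[
a_V\!\left(S_{V_h}T_V(\varphi,\uy,r),(\psi_h,\uz_h,s_h)\right)=a_V\!\left(T_V(\varphi,\uy,r),(\psi_h,\uz_h,s_h)\right)=b_V\!\left((\varphi,\uy,r),(\psi_h,\uz_h,s_h)\right),
\]
which is exactly $a_V(T_{V_h}(\varphi,\uy,r),(\psi_h,\uz_h,s_h))$. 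Since both $S_{V_h}T_V(\varphi,\uy,r)$ and $T_{V_h}(\varphi,\uy,r)$ lie in $V_h$ and, by the discrete inf--sup condition \eqref{4.2}, $a_V$ is nondegenerate on $V_h\times V_h$, the two must coincide.

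Next I would invoke the standard spectral approximation theorem for compact operators. Because $\|T_{V_h}-T_V\|_{V\to V}\to0$ (Lemma \ref{lemma 4.2}), for each fixed $i$ and all sufficiently small $h$ the eigenvalues $\mu_{i,h}$ of $T_{V_h}$ cluster around $\mu_i$, the discrete spectral projection $E_h$ onto $N_h(\lambda_{i,h})$ captures precisely the eigenvalues near $\mu_i$, and the gap between $N(\lambda_i)=\mathrm{range}\,E$ and $N_h(\lambda_{i,h})=\mathrm{range}\,E_h$ is controlled by
\[
\hat\delta(N(\lambda_i),N_h(\lambda_{i,h}))\le C\,\bigl\|(T_V-T_{V_h})|_{N(\lambda_i)}\bigr\|;
\]
the mechanism is the elementary bound $\mathrm{dist}(\zeta,N_h)\le\|(E-E_h)\zeta\|$ for $\zeta\in N(\lambda_i)$, and the symmetric counterpart for $N_h$, both of which the abstract theorem estimates by $\|(T_V-T_{V_h})|_{N(\lambda_i)}\|$. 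Here I would also note that the symmetry of $a_V$ and $b_V$ renders the ascent of each $\mu_i$ equal to one, so the eigenspaces of \eqref{3.4} coincide with the invariant subspaces $\mathrm{range}\,E$ of $T_V$, and that only primal norm convergence — not adjoint information — enters this eigenvector estimate.

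Finally I would convert the right-hand side into the claimed quantity using the first step: for $u\in N(\lambda_i)$ one has $T_Vu=\mu_i u$ with $\mu_i=1/\lambda_i$, hence
\[
(T_V-T_{V_h})u=(I_V-S_{V_h})T_Vu=\mu_i\,(I_V-S_{V_h})u,
\]
so that $\|(T_V-T_{V_h})|_{N(\lambda_i)}\|=|\mu_i|\,\|(I_V-S_{V_h})|_{N(\lambda_i)}\|$; absorbing the fixed constant $|\mu_i|$ into $C$ yields the assertion. The main obstacle is not the algebra but the careful application of the abstract theory: one must justify, for $h$ small, that $N_h(\lambda_{i,h})$ has the correct dimension and is the range of the spectral projection associated with the cluster near $\mu_i$. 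This is exactly where the norm convergence $\|T_{V_h}-T_V\|\to0$ and the ``small enough $h$'' hypothesis are indispensable, and where one verifies that the two-sided gap $\hat\delta$ is legitimately estimated through $E-E_h$.
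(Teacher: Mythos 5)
Your proposal supplies a complete argument for a lemma that the paper itself states \emph{without} proof: in the paper it appears only as ``an abstract estimation,'' imported from the standard spectral approximation theory for norm-convergent families of compact operators (Osborn, Babu\v{s}ka--Osborn), and your reconstruction is precisely that theory's argument, so the two approaches coincide in substance. The key identity $T_{V_h}=S_{V_h}T_V$ (which follows, as you say, from the definitions of the two operators, the inclusion $V_h\subset V$, and the nondegeneracy of $a_V$ on $V_h\times V_h$ guaranteed by \eqref{4.2}), combined with $T_V|_{N(\lambda_i)}=\mu_i I_V$, is exactly the classical mechanism for converting the abstract bound $\hat\delta(N(\lambda_i),N_h(\lambda_{i,h}))\le C\|(T_V-T_{V_h})|_{N(\lambda_i)}\|$ into the stated bound $C\|(I_V-S_{V_h})|_{N(\lambda_i)}\|$.

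One point deserves a warning. You justify identifying the eigenspace $N(\lambda_i)$ with the range of the spectral projection $E$ by claiming that the symmetry of $a_V$ and $b_V$ forces the ascent of $\mu_i$ to be one. That inference is not valid as stated: $a_V$ is symmetric but \emph{indefinite} (it is a saddle-point form), so $T_V$ is self-adjoint only with respect to an indefinite bilinear form, and such operators can possess nontrivial Jordan chains; indeed, a short computation shows only that a Jordan chain above an eigenvector $x$ would force $a_V(x,x)=0$, which indefiniteness does not exclude, and the paper's ordering $\eqslantless$ is introduced precisely because eigenvalues may be complex, i.e., the problem is genuinely nonsymmetric in the definite sense. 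The direction $\delta(N(\lambda_i),N_h(\lambda_{i,h}))$ of the gap survives regardless, because $N(\lambda_i)\subseteq\mathrm{range}\,E$ always holds, but the reverse direction $\delta(N_h(\lambda_{i,h}),N(\lambda_i))$ requires $N(\lambda_i)=\mathrm{range}\,E$, i.e., ascent one. The clean repair is to read $N(\lambda_i)$ and $N_h(\lambda_{i,h})$ as generalized eigenspaces (ranges of the spectral projections), which is how the abstract theorem is formulated and, implicitly, how the paper uses it; with that reading your proof is correct and the ascent claim can simply be dropped.
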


Let ${\undertilde{H}^1}(\Omega ): = (H^{1}(\Omega))^{d} $, the discretization \eqref{3.1} can be carried out with $V_h$, and a first order accuracy for
eigenfunctions and second order accuracy for eigenvalues can be expected.
\begin{theorem}\label{Thm:order}
For any $i\in \mathbb{N}^{+}$, if $N(\lambda_{i})\subset ({H^2}(\Omega ) \times {\undertilde{H}^1}(\Omega ) \times {H^2}(\Omega ) )\cap V$, then
$$\hat \delta (N({\lambda _i}),{N_h}(\lambda _{i,h})) \le C(N({\lambda _i})){h}\ \  {\rm{and}}\ \  |\lambda_{i} - \lambda _{i,h}|\le Ch^2.
$$
\end{theorem}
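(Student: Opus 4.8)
The plan is to reduce the eigenfunction bound to a best-approximation estimate and then to use the symmetry of the problem to square this into the eigenvalue bound. First I would invoke the gap estimate of the preceding lemma, which already reduces $\hat\delta(N(\lambda_i), N_h(\lambda_{i,h}))$ to controlling $\|(I_V - S_{V_h})|_{N(\lambda_i)}\|$. By the approximation estimate in Lemma \ref{lemma 4.2}, for every $(\varphi, \uy, r) \in N(\lambda_i)$ this is bounded by the best approximation error $\inf_{(\psi_h, \uz_h, s_h) \in V_h}\|(\varphi, \uy, r) - (\psi_h, \uz_h, s_h)\|_V$, so it remains to exhibit one good competitor in $V_h$.

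Next I would estimate the best approximation error componentwise, exploiting the product structure of the $V$-norm $\|(\varphi,\uy,r)\|_V = (\|\varphi\|_{1,\Omega}^2 + \|\uy\|_{0,\Omega}^2 + \|r\|_{1,\Omega}^2)^{1/2}$. Since by hypothesis $\varphi, r \in H^2(\Omega)$, I would take $\psi_h, s_h$ to be their continuous piecewise-linear interpolants, for which the standard Lagrange estimate gives $\|\varphi - \psi_h\|_{1,\Omega} \le Ch|\varphi|_{2,\Omega}$ and likewise for $r$; since $\uy \in \undertilde{H}^1(\Omega)$, I would take $\uz_h$ to be its elementwise $L^2$-projection onto $\undertilde{\mathcal{C}}_h$, giving $\|\uy - \uz_h\|_{0,\Omega} \le Ch|\uy|_{1,\Omega}$. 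The mean-value and homogeneous boundary constraints built into $\tilde{\mathcal{L}}_{h0}$ and $\tilde{\mathcal{L}}_h$ are accommodated by a standard constraint-preserving (quasi-)interpolant, which does not spoil the order. Summing the three contributions yields $\|(I_V - S_{V_h})|_{N(\lambda_i)}\| \le C(N(\lambda_i))h$, and with the gap estimate this gives the first assertion.

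For the eigenvalue bound I would appeal to the spectral approximation theory of Babu\v{s}ka and Osborn. The decisive structural fact is that both $a_V$ and $b_V$ are symmetric, so the generalized eigenvalue problem $a_V(u,\cdot) = \lambda\, b_V(u,\cdot)$ is self-adjoint: $T_V$ satisfies $a_V(T_V u, v) = a_V(u, T_V v)$, whence its dual solution operator coincides with $T_V$ and the primal and dual eigenspaces associated with $\lambda_i$ are identical. In the Babu\v{s}ka--Osborn estimate the eigenvalue error is controlled by the product of the primal and dual eigenfunction approximation errors; by self-adjointness this product collapses to a square, so for a fixed $i$ one obtains $|\mu_i - \mu_{i,h}| \le C\|(I_V - S_{V_h})|_{N(\lambda_i)}\|^2 \le Ch^2$. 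Transferring through the reciprocal relations $\lambda_i\mu_i = 1$ and $\lambda_{i,h}\mu_{i,h} = 1$ from the earlier lemmas, and using that $\mu_i$ and $\mu_{i,h}$ stay bounded away from $0$ as $h \to 0$ for a fixed index, converts this into $|\lambda_i - \lambda_{i,h}| \le Ch^2$.

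The hard part will be the quadratic eigenvalue estimate, not the $O(h)$ eigenfunction count, which is essentially bookkeeping once the interpolation orders are fixed. The subtlety is that $a_V$ is indefinite, carrying the saddle-point structure of the mixed formulation, so the clean positive-definite self-adjoint theory does not apply verbatim; one must instead run the Babu\v{s}ka--Osborn argument in the self-adjoint-but-indefinite setting, where the inf-sup stability of Lemma \ref{lemma 4.1} plays the role coercivity would otherwise play, guaranteeing that $S_{V_h}$ is a well-defined quasi-optimal projection and that the cross terms in the eigenvalue-error identity genuinely cancel. Verifying that cancellation, and hence the squaring, is where the main care is needed.
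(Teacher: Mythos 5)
Your proposal is correct and is essentially the detailed execution of what the paper's one-line proof delegates to ``the standard theory of finite element methods'': the preceding gap lemma plus the quasi-optimality of $S_{V_h}$ from Lemma \ref{lemma 4.2} plus componentwise interpolation (Lagrange for the two $H^2$ components, elementwise $L^2$-projection for the $\undertilde{H}^1$ component, with constraint-preserving corrections) for the $O(h)$ eigenfunction bound, and the symmetric Babu\v{s}ka--Osborn squaring argument, run in the inf-sup-stable rather than coercive setting, for the $O(h^2)$ eigenvalue bound. The only ingredient the paper states explicitly that you leave implicit in the constant $C(N(\lambda_i))$ is that the finite dimensionality of $N(\lambda_i)$ and the equivalence of norms on it are what turn the per-eigenfunction interpolation estimates into a uniform bound on the operator norm $\left\| (I_V - S_{V_h})|_{N(\lambda_i)} \right\|$.
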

\begin{proof} We only have to note that $N({\lambda _i})$ is of finite (fixed) dimension, and any two norms on
that are equivalent. The result follows from the standard theory of finite element methods.
\end{proof}

Let $n_{e_{0}}$, $n_{t}$ and $n_{e}$ be the dimension of $\mathcal{L}_{h0}$, $\undertilde{\mathcal{C}}_{h}$ and $\mathcal{L}_{h}$, respectively, and $\{ {{\phi _1},{\phi _2}, \ldots ,{\phi _{{n_{e_{0}}}}}} \}$, $\{ {{\undertilde{\chi} _1},{\undertilde{\chi} _2}, \ldots ,{\undertilde{\chi} _{{n_t}}}} \}$ and $\{ {{\phi _1},{\phi _2}, \ldots ,{\phi _{{n_{e}}}}} \}$ be the finite element basis of $\mathcal{L}_{h0}$, $\undertilde{\mathcal{C}}_{h}$ and $\mathcal{L}_{h}$, respectively. Then, we have
\begin{equation}\label{Eq3.3}
{\varphi _h} = \sum\limits_{i = 1}^{n_{e_{0}}} {{\omega _i}{\phi _i}}, \  \undertilde{y}_{h} = \sum\limits_{i = 1}^{n_t} {{\eta _i}{\undertilde{\chi} _i}} \ \ {\rm{and}}\ \   r_{h} = \sum\limits_{i = 1}^{n_{e}} {{\gamma _i}{\phi _i}},
\end{equation}
and $\omega  = [ {{\omega _1},{\omega _2}, \ldots ,{\omega _{n_{e_{0}}}}} ]^\top$, $\eta  = [ {{\eta _1},{\eta _2}, \ldots ,{\eta _{n_t}}} ]^\top$ and $\gamma  = [ {{\gamma _1},{\gamma _2}, \ldots ,{\gamma _{n_{e}}}} ]^\top$. We further specify the stiffness, mass and convection matrices in Table \ref{T0}. Moreover, in the discrete setting zero mean value on $\varphi_{h}$ and $r_{h}$ means that
\begin{equation}\label{Eq3.4}
(\varphi_{h},1) = \sum\limits_{i = 1}^{n_{e_{0}}} {{\omega _i}({\phi _i},1)} = \alpha^\top \omega=0\ \
{\rm{and}}\ \
(r_{h},1) = \sum\limits_{i = 1}^{n_{e}} {{\gamma _i}({\phi _i},1)} = \beta^\top \gamma=0,
\end{equation}
where $$\alpha = [(\phi_{1},1),(\phi_{2},1), \ldots , (\phi_{{n_{e_{0}}}},1)]^\top \
{\rm{and}}\ \ \beta = [(\phi_{1},1),(\phi_{2},1), \ldots , (\phi_{{n_{e}}},1)]^\top.$$

\begin{table}
\begin{center}
\begin{tabular}{ccc}
  \hline
  Matrix&Dimension& Definition\\
  \hline
  $K_{P}$&$n_{e_{0}}\times n_{e_{0}}$&\makecell[l]{$(K_{P})_{ij} = ( {{P}\nabla \phi_{j} ,\nabla \phi_{i} })$}\\
  $F_{P}$&$n_{{e_{0}}}\times n_{t}$&\makecell[l]{$(F_{P})_{ij} = ( {P \undertilde{\chi}{_{j}},\nabla \phi_{i} } )$}\\
  $M_{P}$&$n_{t}\times n_{t}$&\makecell[l]{$(M_{P})_{ij} = ( {( {I + {P}} )\undertilde{\chi}_{j},\undertilde{\chi}_{i}} )$}\\
  $G$&$n_{t}\times n_{e}$&\makecell[l]{$(G)_{ij} = ( {\nabla \phi_{j},\undertilde{\chi}_{i}} )$}\\
  $X$&$n_{e_{0}}\times n_{e_{0}}$&\makecell[l]{$(X)_{ij} =  ( {\phi_{j} ,\phi_{i} } )$}\\
  $Y$&$n_{e_{0}}\times n_{e}$&\makecell[l]{$(Y)_{ij} =  ( {\phi_{j},\phi_{i} } )$}\\
    \hline
\end{tabular}
    \caption{Stiffness, mass and convection matrices.}\label{T0}
  \end{center}
  \vspace{-1em}
\end{table}

The constraint \eqref{Eq3.4} is taken into consideration by introducing Lagrangian multipliers $\sigma$ and $\varsigma$. Substituting \eqref{Eq3.3} and \eqref{Eq3.4} multiplied by $\sigma$ and $\varsigma$ into \eqref{3.1}, the discretization gives rise to a GEP\\
\begin{equation}\label{Eq3.5}
\mathcal{K}z = \lambda \mathcal{M}z,
\end{equation}
where $$\mathcal{K} = \left[ {\begin{array}{*{20}{c}}
{{K_{P}}}&{{F_{P}}}&O&\alpha &O\\
{F_{P}^\top}&{{M_{P}}}&{ - G}&O&O\\
O&{ - {G^\top}}&O&O&\beta \\
{{\alpha ^\top}}&O&O&O&O\\
O&O&{{\beta ^\top}}&O&O
\end{array}} \right], \mathcal{M} = \left[ {\begin{array}{*{20}{c}}
X&O&Y&O&O\\
O&O&O&O&O\\
{{Y^\top}}&O&O&O&O\\
O&O&O&O&O\\
O&O&O&O&O
\end{array}} \right]\  {\rm{and}}\  z = \left[ {\begin{array}{*{20}{c}}
\omega \\
\eta \\
\gamma \\
\sigma\\
\varsigma
\end{array}} \right].
$$
In this paper, only a few smallest real eigenvalues of GEP \eqref{Eq3.5} are desired as the approximate transmission eigenvalues.

\section{Preprocessing GEP \eqref{Eq3.5}}\label{sec4}
Let ${\hat n_t} = n_{t}/d$, by definition, $M_{P}=(I+P) \otimes I_{\hat n_t}$ is a block diagonal matrix with $M_{P}^{-1}=(I+P)^{-1} \otimes I_{\hat n_t}$. Define the invertible matrix
\begin{equation*}
{\mathcal{W}} = \left[ {\begin{array}{*{20}{c}}
I&{ - {F_p}M_p^{ - 1}}&O&O&O\\
O&{{G^\top}M_P^{ - 1}}&I&O&O\\
O&O&O&I&O\\
O&O&O&O&I\\
O&I&O&O&O
\end{array}} \right],
\end{equation*}
then the GEP \eqref{Eq3.5} is equivalent to the GEP
\begin{equation}\label{Eq3.6}
({\mathcal{W}}\mathcal{K}{\mathcal{W}^\top})(\mathcal{W}^{ - \top}z) = \lambda({\mathcal{W}}\mathcal{M}{\mathcal{W}^\top})(\mathcal{W}^{ - \top}z),
\end{equation}
where
\begin{equation*}
{\mathcal{W}}\mathcal{K}{\mathcal{W}^\top}=\left[ {\begin{array}{*{20}{cccc:c}}
\hat{K}&\hat{F}&\alpha &O&O\\
\hat{F}^{\top}&\hat{G}&O&\beta &O\\
{{\alpha ^\top}}&O&O&O&O\\
O&{{\beta ^\top}}&O&O&O\\
\hdashline
O&O&O&O&{{M_P}}
\end{array}} \right] := \hat{\mathcal{K}} \oplus M_{P}
\end{equation*}
with
\begin{equation}\label{EQhat_k}
\hat{K} = {{K_{P}} - {F_{P}}{M_{P}}^{ - 1}F_{P}^\top}, \ \hat{F }= {F_{P}{M_{P}^{ - 1}}G}\ \ {\rm{and}}\ \ \hat{G} = { - {G^\top}{M_{P}^{ - 1}}G},
\end{equation}
and
\begin{equation*}
{\mathcal{W}}\mathcal{M}{\mathcal{W}^\top} = \left[ {\begin{array}{*{20}{cccc:c}}
X&Y&O&O&O\\
{{Y^\top}}&O&O&O&O\\
O&O&O&O&O\\
O&O&O&O&O\\
\hdashline
O&O&O&O&O
\end{array}} \right]:=\hat{\mathcal{M}}\oplus O.
\end{equation*}
Now we consider the following GEP
\begin{equation}\label{Eq3.7}
\hat{\mathcal{K}}\hat{z}=\lambda \hat{\mathcal{M}}\hat{z},
\end{equation}
where $  \hat{z} = [ \omega, \gamma, \sigma,\varsigma]^\top$. According to the analysis above, the following theorem can be derived immediately.
\begin{theorem}\label{Th3.8}
The GEPs \eqref{Eq3.5} and \eqref{Eq3.7} have the same non-infinite eigenvalues, and
$${\rm{Spec}}(\hat{\mathcal{K}},\hat{\mathcal{M}}) = {\rm{Spec}}(\mathcal{K},\mathcal{M}) \backslash \{\infty\}_{k=1}^{n_t},$$
where ${\rm{Spec}}(\hat{\mathcal{K}},\hat{\mathcal{M}})$ denotes the set of eigenvalues of the linear pencil $\hat{\mathcal{K}}-\lambda\hat{\mathcal{M}}$.
\end{theorem}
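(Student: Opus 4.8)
The plan is to deduce the statement directly from the two structural computations already carried out above: that \eqref{Eq3.7} is produced from \eqref{Eq3.5} by a congruence with the invertible matrix $\mathcal{W}$, and that this congruence renders the transformed pencil block diagonal. First I would combine the displayed identities $\mathcal{W}\mathcal{K}\mathcal{W}^\top=\hat{\mathcal{K}}\oplus M_P$ and $\mathcal{W}\mathcal{M}\mathcal{W}^\top=\hat{\mathcal{M}}\oplus O$ with the multiplicativity of the determinant to obtain, for every $\lambda\in\mathbb{C}$,
$$\det(\mathcal{W})^2\det(\mathcal{K}-\lambda\mathcal{M})=\det\!\big(\mathcal{W}(\mathcal{K}-\lambda\mathcal{M})\mathcal{W}^\top\big)=\det(\hat{\mathcal{K}}-\lambda\hat{\mathcal{M}})\cdot\det(M_P-\lambda O).$$
Because $M_P-\lambda O=M_P$ is independent of $\lambda$ and $\mathcal{W}$ is invertible, this exhibits the characteristic polynomial of $(\mathcal{K},\mathcal{M})$ as the constant multiple $\det(M_P)/\det(\mathcal{W})^2$ of that of $(\hat{\mathcal{K}},\hat{\mathcal{M}})$, the proportionality being meaningful as soon as $\det(M_P)\neq0$.

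The second step is to check $\det(M_P)\neq0$. In Case I one has $M_P=(I+P)\otimes I_{\hat n_t}$ with $P=(A^{-1}-I)^{-1}$; since $\kappa^*<1$ makes every eigenvalue of $A$ smaller than $1$, the eigenvalues of $A^{-1}-I$ are positive, so $P$ is symmetric positive definite and $I+P$ has eigenvalues larger than $1$, hence is invertible. This gives $\det(M_P)=\det(I+P)^{\hat n_t}\neq0$ (in Case II the same conclusion holds for the analogous matrix built from $Q$). The factor relating the two characteristic polynomials is therefore a nonzero constant, so the finite eigenvalues of $(\mathcal{K},\mathcal{M})$ and $(\hat{\mathcal{K}},\hat{\mathcal{M}})$ coincide with identical algebraic multiplicities; in particular the two pencils have the same non-infinite spectrum.

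It remains to account for the infinite eigenvalues. The number of infinite eigenvalues of a regular pencil equals the size of its matrices minus the degree of its characteristic polynomial; the two characteristic polynomials above have the same degree, while $\mathcal{K}$ is of size $n_{e_0}+n_t+n_e+2$ and $\hat{\mathcal{K}}$ of size $n_{e_0}+n_e+2$, a difference of exactly $n_t$. Hence $(\hat{\mathcal{K}},\hat{\mathcal{M}})$ carries precisely $n_t$ fewer infinite eigenvalues than $(\mathcal{K},\mathcal{M})$, which is the asserted identity ${\rm Spec}(\hat{\mathcal{K}},\hat{\mathcal{M}})={\rm Spec}(\mathcal{K},\mathcal{M})\setminus\{\infty\}_{k=1}^{n_t}$. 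I expect no genuine difficulty in the finite part, which is pure congruence invariance; the only point requiring care is the bookkeeping of the infinite eigenvalues, since these are not roots of the characteristic polynomial and must be counted through the drop in matrix size relative to polynomial degree, with the invertibility of $M_P$ being exactly what guarantees that the $n_t$ removed eigenvalues are all infinite and that no finite eigenvalue is disturbed.
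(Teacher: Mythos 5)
Your proposal is correct and takes essentially the same route as the paper: both rest on the congruence by the invertible matrix $\mathcal{W}$ and the resulting block-diagonal splitting $\mathcal{W}\mathcal{K}\mathcal{W}^\top=\hat{\mathcal{K}}\oplus M_P$, $\mathcal{W}\mathcal{M}\mathcal{W}^\top=\hat{\mathcal{M}}\oplus O$, from which the extra block contributes exactly $n_t$ infinite eigenvalues. The paper's proof simply declares this ``obvious,'' whereas you fill in the details it omits -- the determinant factorization $\det(\mathcal{W})^2\det(\mathcal{K}-\lambda\mathcal{M})=\det(\hat{\mathcal{K}}-\lambda\hat{\mathcal{M}})\det(M_P)$, the positive definiteness of $M_P$, and the degree-deficiency count of the infinite eigenvalues.
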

\begin{proof}
The GEP \eqref{Eq3.5} has the same  eigenvalues as GEP \eqref{Eq3.6}, and according to the block structure of the GEP  \eqref{Eq3.6} and the definition of the GEP \eqref{Eq3.7}, this theorem holds obviously.
\end{proof}

Note that the deflation shown above drastically reduces the size of the GEP under consideration. Specifically, one can see that in 2D case the ratio of the number of triangles to that of nodes is about 2, while in 3D case the ratio of the number of tetrahedrons to that of nodes is about 6. In other words, $n_t/n_e \sim 4$ in 2D case and $n_t/n_e \sim$ 18 in 3D case. Therefore, a drastic reduction of the matrix size is expected.

Moreover, unlike several Schur complement based techniques where additional linear systems should be solved, here, the inverse of $M_P$ is rather trivial due to the block diagonal structure of $M_P$. Even more interesting, we show instantly that $\hat K$, $\hat F$ and $\hat G$ in \eqref{EQhat_k} can be directly assembled without carrying out any matrix multiplications in their formal definitions.


\begin{theorem}\label{assemble hatK} Entries of the matrices $\hat K$, $\hat F$ and $\hat G$ in \eqref{EQhat_k} are explicitly given as follows:
$$
\begin{aligned}{(\hat K)_{ij}}&  = ({P}\nabla {\phi _j},\nabla {\phi _i}) - ({P}{(I + {P})^{ - 1}}{P}\nabla {\phi _j},\nabla {\phi _i})= ({A{\nabla\phi _j},\nabla {\phi _i}}),\  i,j = 1,2,\ldots,n_{e_{0}},\\
{(\hat F)_{ij}}& = ({P}{(I + {P})^{ - 1}}\nabla {\phi _j},\nabla {\phi _i})= ({A}\nabla {\phi _j},\nabla {\phi _i}),\ i = 1,2,\ldots,n_{e_{0}},\ j = 1,2,\ldots,n_{e},\\
{(\hat G)_{ij}}& = -({(I + {P})^{ - 1}}\nabla {\phi _j},\nabla {\phi _i})= ({(A - I})\nabla {\phi _j},\nabla {\phi _i}),\ i,j = 1,2,\ldots,n_{e}.
\end{aligned}
$$
\end{theorem}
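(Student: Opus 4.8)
The plan is to peel the statement into two layers: first a \emph{structural} reduction that turns the Schur-complement products in \eqref{EQhat_k} into weighted $L^2$-inner products (the first equality in each line), and then a purely \emph{algebraic} simplification of the weight matrices via $P=(A^{-1}-I)^{-1}$ (the second equality).

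For the structural step, the key observation is that since the $\phi_i$ are linear elements, each $\nabla\phi_i$ is piecewise constant and therefore already lies in $\undertilde{\mathcal{C}}_h$. Because $\undertilde{\mathcal{C}}_h$ is built from element-local piecewise constants, $M_P$ is block diagonal with one $d\times d$ block $|K|\,(I+P)|_K$ per element $K$, so $M_P^{-1}$ is assembled block-by-block. Writing $F_P$, $G$ and $K_P$ in terms of the local gradients $\nabla\phi_i|_K$ and the local blocks of the $P$- and $(I+P)$-weighted mass matrices, the products $F_PM_P^{-1}F_P^\top$, $F_PM_P^{-1}G$ and $G^\top M_P^{-1}G$ factor over elements; on each $K$ the volume factors cancel and the three local weight matrices $P(I+P)^{-1}P$, $P(I+P)^{-1}$ and $(I+P)^{-1}$ emerge. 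Summing over $K$ (over the appropriate basis index ranges) reproduces exactly
$$(\hat K)_{ij}=(P\nabla\phi_j,\nabla\phi_i)-(P(I+P)^{-1}P\nabla\phi_j,\nabla\phi_i),\quad (\hat F)_{ij}=(P(I+P)^{-1}\nabla\phi_j,\nabla\phi_i),$$
together with $(\hat G)_{ij}=-((I+P)^{-1}\nabla\phi_j,\nabla\phi_i)$, i.e.\ the first equalities.

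For the algebraic step I would record three pointwise identities for the symmetric positive definite $A$ and $P=(A^{-1}-I)^{-1}$. Since $A^{-1}=P^{-1}+I$ one has $PA^{-1}=I+P$, whence $(I+P)^{-1}=AP^{-1}=A(A^{-1}-I)=I-A$. As $P=(I-A)^{-1}A$ is a rational function of $A$, it commutes with $A$ and with $I-A$, so $P(I+P)^{-1}=(I-A)P=(I-A)(I-A)^{-1}A=A$, and then $P-P(I+P)^{-1}P=P-AP=(I-A)P=A$. Substituting these three identities into the inner products from the structural step converts the weights into $A$, $A$ and $-(I-A)=A-I$, giving the claimed right-hand sides $(A\nabla\phi_j,\nabla\phi_i)$, $(A\nabla\phi_j,\nabla\phi_i)$ and $((A-I)\nabla\phi_j,\nabla\phi_i)$.

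The main obstacle is the structural step: one must verify that the Schur complements collapse to \emph{pointwise}-weighted inner products rather than to products of element-averaged weights. This works precisely because $\undertilde{\mathcal{C}}_h$ is piecewise constant—so the weight matrices act independently on each element and $M_P$ inverts block-wise—and implicitly because $A$, hence $P$, is constant on each element, which lets the three weight matrices be pulled back under a single integral over $K$. Once this element-local factorization is in place, the algebraic step is routine matrix algebra.
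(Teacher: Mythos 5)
The paper states this theorem without proof (it is asserted to follow ``instantly'' from \eqref{EQhat_k}), so there is no written argument of the paper's to compare against; judged on its own, your proof is correct and supplies exactly the computation the paper omits. Both halves check out. The structural step works because each $\nabla\phi_i$ is piecewise constant and so lies in $\undertilde{\mathcal{C}}_h$; with $M_P^{-1}$ block diagonal (one $d\times d$ block per element), the products $F_PM_P^{-1}F_P^\top$, $F_PM_P^{-1}G$ and $G^\top M_P^{-1}G$ factor over elements into $\sum_K |K|\,(\nabla\phi_i|_K)^\top W|_K\,(\nabla\phi_j|_K)$ with $W=P(I+P)^{-1}P$, $P(I+P)^{-1}$ and $(I+P)^{-1}$ respectively, which are precisely the pointwise-weighted inner products in the first equalities. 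The algebraic identities $(I+P)^{-1}=I-A$, $P(I+P)^{-1}=A$ and $P-P(I+P)^{-1}P=A$ are also correct, since every factor is a rational function of the symmetric matrix $A$ and hence all factors commute. The one point you flag only as implicit should be promoted to an explicit hypothesis: the collapse of the Schur complements to pointwise weights is exact only when $A$ (hence $P$) is constant on each element; otherwise the blocks of $M_P$ are element averages $\int_K(I+P)\,dx$, the Schur complements yield products of averaged weights rather than the pointwise weights in the statement, and the first equalities fail. This is not a defect of your argument relative to the paper: the paper makes the same assumption implicitly when it writes $M_P=(I+P)\otimes I_{\hat n_t}$ in Section \ref{sec4}, and all of its numerical examples use constant $A$.
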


By Theorem \ref{assemble hatK}, the sparsity of $\hat K$, $\hat F$ and $\hat G$ is very similar to that of $K_p$, which implies GEP \eqref{Eq3.7} is still very sparse besides the significant reduction of the matrix size. This is extremely helpful to practical calculations. In the numerical examples in 3D case presented in Section \ref{sec5}, we note that solving GEP \eqref{Eq3.7} takes much less memory and time than GEP \eqref{Eq3.5} to calculate the smallest few real transmission eigenvalues with the same mesh.


\section{ Numerical experiments on the mixed element discretization scheme}\label{sec5}
In this section, we present some numerical results of transmission eigenvalue problems \eqref{3.4} and \eqref{3.40} on convex and nonconvex domains. All calculations are performed using MATLAB 2020a installed on a workstation with memory 768G and Intel(R) Xeon(R) Gold 6244 CPU @ 3.60GHz.



Unless otherwise specified, the mesh size in the numerical examples below is set to $h_l = 2^{-3-l}$, $l = 1,2,3,4,5,6$ in 2D and $l = 0,1,2,3$ in 3D. Given a mesh size $h_l$, we obtain a sequence of eigenvalues ${\lambda_{i,h_l}}$, $i\in \mathbb{N}^+$. Dof1 and Dof2 represent the degrees of freedom of the GEP \eqref{Eq3.5} and \eqref{Eq3.7}, respectively. The rate of convergence of the transmission eigenvalue is computed by
$$ {\log _2}\Bigg(\Big|\frac{{{\lambda _{{i,h_{l+1}}}} - {\lambda _{{i,h_l}}}}}{{{\lambda_{{i,h_{l+2}}}} - {\lambda _{{i,h_{l + 1}}}}}}\Big|\Bigg),l = 1,2,3,4 {\rm{\ in \ 2D\ and }}\ l = 0,1{\rm{\ in \ 3D. }}
$$

\subsection{ 2D examples}

Here, we are concerned with four domains in 2D \cite{Cakoni2010,Ji20133} illustrated in Figure \ref{fig: figure1}, of which two are the convex domains and the rest are the nonconvex domains. Specifically, these domains are
$\Omega_1 = \{(x,y)|0\leq x^2 + y^2 \leq 1/4\}$, $\Omega_2 =[0,1]^2$, $\Omega_3= [-0.5,0.5] \times [-0.5, 0.5]\backslash (0, 0.5] \times [-0.5, 0)$ and $ \Omega_4 =\{(x,y)|1/16\leq x^2 + y^2 \leq 1/4\}$. $A$ in \eqref{2.4} can be one of the following 2-by-2 matrices
$$
{A_1} = \frac{1}{4}I_2, \ {A_2} = \left[ {\begin{array}{*{20}{c}}
{1/2}&0\\
0&{1/8}
\end{array}} \right]
, \ {A_3} = \left[ {\begin{array}{*{20}{c}}
{1/6}&0\\
0&{1/8}
\end{array}} \right]
,\ {A_4} = \left[ {\begin{array}{*{20}{c}}
0.1372&0.0189\\
0.0189&0.1545
\end{array}} \right].$$
\begin{figure*}[!htb]
    \centering
    \subfigure[ The initial mesh of $\Omega_{1}$.]   {\includegraphics[width=0.4\hsize, height=0.3\hsize]{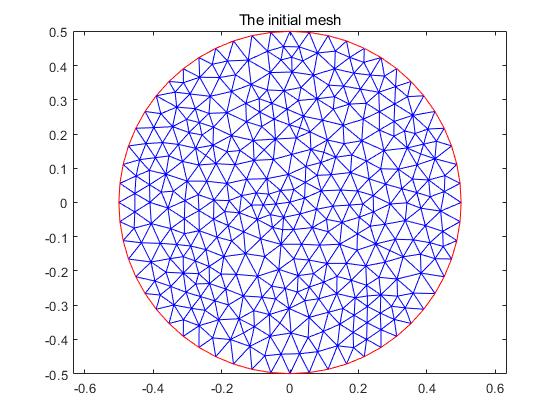}\label{1a}}
    \subfigure[ The initial mesh of $\Omega_{2}$.]{\includegraphics[width=0.4\hsize, height=0.3\hsize]{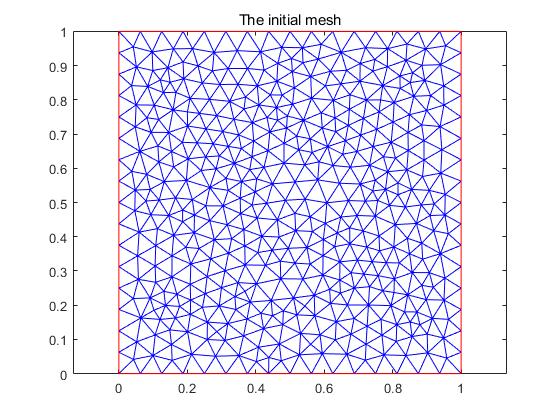}\label{1b}}
    \subfigure[The initial mesh of $\Omega_{3}$.]    {\includegraphics[width=0.4\hsize, height=0.3\hsize]{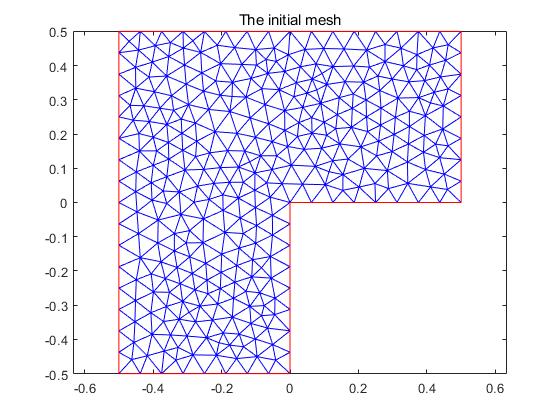}\label{1c}}
    \subfigure[The initial mesh of $\Omega_{4}$.]         {\includegraphics[width=0.4\hsize, height=0.3\hsize]{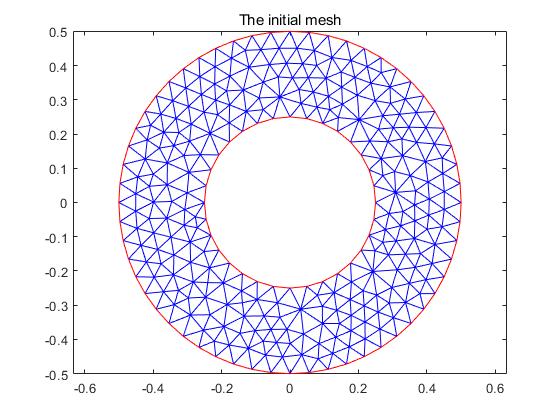}\label{1d}}
    \caption{ The initial meshes of domains for 2D examples.}
    \label{fig: figure1}
\end{figure*}

\noindent We first verify the proposed scheme by comparing the smallest real transmission eigenvalue of

{\bf{Example 1 }} $\Omega_1$ with the anisotropic case $A_1$\\
calculated by our MATLAB implementation with the known exact result, and then we investigate the following examples:

{\bf{Example 2 }} \ $\Omega_2$ with the anisotropic cases $A_i, i = 1, 2, 3, 4$, respectively.

{\bf{Example 3 }} \ $\Omega_3$ with the anisotropic cases $A_i, i = 1, 2, 3, 4$, respectively.

{\bf{Example 4 }} \ $\Omega_4$ with the anisotropic cases $A_i, i = 1, 2, 3, 4$, respectively.

We compute the first six real transmission eigenvalues $(k_{i,h_{l}} = \sqrt{\lambda_{i,h_{l}}}) $ of
Example 1 on the mesh level $h_{1}$ to $h_{6}$ and the smallest real transmission eigenvalues of
Example 2, 3 and 4 on the mesh level $h_{6}$. The results are listed in Table \ref{T1} and Table \ref{T2}. Dof1/Dof2 is about 3 as expected. In particular, the first real transmission eigenvalue of Example 1 calculated by our method is 5.8053, which is consistent with the exact transmission eigenvalue 5.8 in \cite{Cakoni2010,Ji20133}. And in Example 1, it takes about 4.3 GB memory with 101 seconds to solve the GEP \eqref{Eq3.7} on the mesh level $h_6$. Moreover, as the mesh is refined, the calculated real eigenvalues sequence monotonically decreases to the exact results. We show the convergence rate of eigenvalues in Section \ref{sec4.3}.

\begin{table}
\begin{center}
\begin{tabular}{ccccccccc}
  \hline
   Mesh &Dof1&Dof2&$k_{1,h_{l}}$& $k_{2,h_{l}}$& $k_{3,h_{l}}$& $k_{4,h_{l}}$& $k_{5,h_{l}}$&$k_{6,h_{l}}$\\
  \hline

   $l = 1$&2116&708& 5.8978& 6.9404& 6.9541&  7.8375& 7.8555& 7.8845\\

  $ l = 2$&8056&2688& 5.8301& 6.8386& 6.8392& 7.6392& 7.6405&  7.6747\\

  $l = 3$&31792&10600& 5.8117& 6.8108& 6.8109& 7.5850& 7.5851& 7.6239\\

  $l = 4$&132796&44268 & 5.8067&  6.8031& 6.8031& 7.5705&  7.5705& 7.6107\\

  $l = 5$&536656&178888& 5.8056& 6.8013& 6.8013& 7.5671& 7.5671&  7.6076\\

  $ l =6$&2107438&702482&  5.8053& 6.8009& 6.8009&  7.5663&  7.5663& 7.6069\\
    \hline
\end{tabular}
    \caption{ The first six real transmission eigenvalues of Example 1.}\label{T1}
  \end{center}
\end{table}

\begin{table}
\begin{center}
\begin{tabular}{ccccccc}
  \hline
 $\Omega$ &Dof1 &Dof2&$A_1$&$A_2$&$A_3$&$A_4$\\
\hline
  $\Omega_2$&2722144&907384&5.2987&4.3867&3.5816& 3.6105 \\

  $\Omega_3$&2041984&680664&6.7284&5.9350& 4.3026& 4.3514\\

  $\Omega_4$&1605368&535124&11.3526&11.6678& 7.1936&7.1943\\
    \hline
\end{tabular}
    \caption{ The smallest real transmission eigenvalues of Example 2, 3 and 4.}\label{T2}
  \end{center}
    \vspace{-1em}
\end{table}

\subsection{ 3D examples}\label{sec4.2}
Here, we are concerned with four domains in 3D illustrated in Figure \ref{Fig2}, of which two are the convex domains and the rest are the nonconvex domains. Specifically, these domains are
$\Omega_{5}=B(0,1)$, $\Omega_{6}=[0,1]^{3}$,
$\Omega_{7}=[0,1]^{3}\backslash (0.25,0.75)^{3}$ and $\Omega_{8}= [0,1]^{3} \backslash \{(x,y,z)|(x-1/2)^{2} + (y-1/2)^{2} < 1/16,\ 0 \leq z\leq 1\}$.
 $A$ in \eqref{2.4} can be one of the following 3-by-3 matrices
$${A_5} = 11I_3, \  {A_6} = \frac{1}{4}I_3,\ {A_7} = \left[ {\begin{array}{*{20}{c}}
{1/4}&0&0\\
0&{1/2}&0\\
0&0&{1/8}
\end{array}} \right],\  {A_8} = \left[ {\begin{array}{*{20}{c}}
  1/4&-1/8& 0\\
-1/8&3/8&-1/8\\
0&-1/8&1/4
\end{array}} \right].
$$

\begin{figure*}[!htb]
    \centering
    \subfigure[$\Omega_{5}$: The unit ball domain.]       {\includegraphics[width=0.48\hsize, height=0.36\hsize]{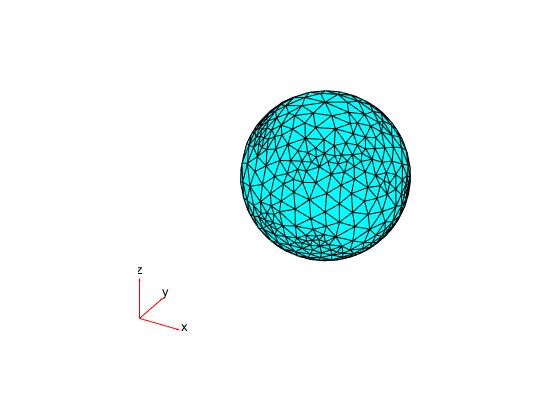}\label{2a}}
    \subfigure[$\Omega_{6}$: The unit cube domain.]       {\includegraphics[width=0.48\hsize, height=0.36\hsize]{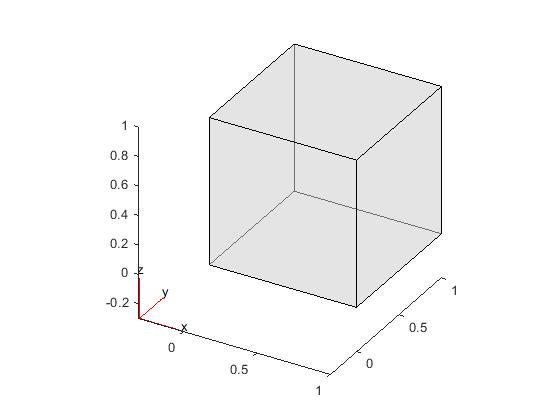}\label{2b}}
    \subfigure[$\Omega_{7}$: The unit cube domain with a inside cube cavity.]   {\includegraphics[width=0.48\hsize, height=0.36\hsize]{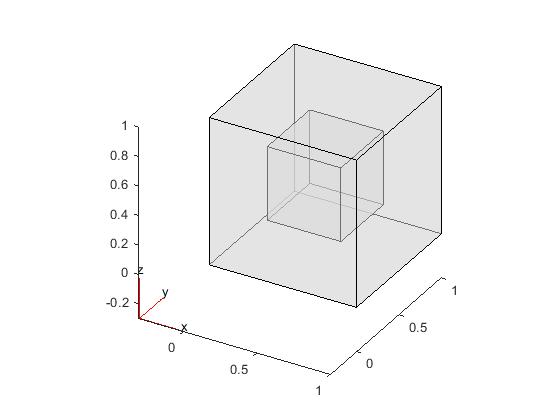}\label{2c}}
    \subfigure[$\Omega_{8}$: The unit cube domain with a cylinder cavity.]{\includegraphics[width=0.48\hsize, height=0.36\hsize]{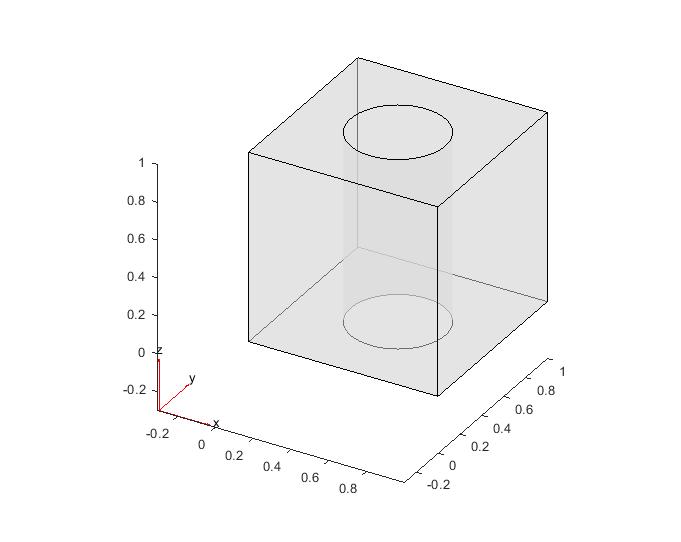}\label{2d}}
    \caption{Domains of 3D examples.}
    \label{Fig2}
\end{figure*}

\noindent We first verify the proposed scheme by comparing the first four real transmission eigenvalues of

{\bf{Example 5 }} $\Omega_5$ with the anisotropic case $A_5$\\
calculated by our MATLAB implementation with the known exact results in \cite{Lechleiter2015}, and then we investigate the following examples:

{\bf{Example 6 }} \ $\Omega_6$ with the anisotropic cases $A_i, i = 6, 7, 8$, respectively.

{\bf{Example 7 }} \ $\Omega_7$ with the anisotropic cases $A_i, i = 6, 7, 8$, respectively.

{\bf{Example 8 }} \ $\Omega_8$ with the anisotropic cases $A_i, i =  6, 7, 8$, respectively.

We compute the first six real eigenvalues $(k_{i,h_{l}} = \sqrt{\lambda_{i,h_{l}}}) $ of Example 5 on the mesh level $h_{0}$ to $h_{3}$ and Example 6, 7 and 8 on the
mesh level $h_3$. The results are listed in Table \ref{T5} and Table \ref{T6}. Dof1/Dof2 is about 10 as expected. It is worth noting that the first four transmission eigenvalues of Example 5 calculated by our method are 5.2052, 5.8882, 6.1057 and 7.2469 indicated in Table \ref{T5}, which are consistent with the exact eigenvalues 5.204, 5.886, 6.104 and 7.244 in \cite{Lechleiter2015}. Moreover, as the mesh is refined, the calculated real eigenvalues sequence monotonically decreases to the exact results.

In Example 5, it takes about 2.6, 9.3 and 178 GB memory with 5.7, 326 and 16943 seconds to calculate the smallest real transmission eigenvalue by solving the GEP \eqref{Eq3.5} on the mesh level $h_0$, $h_1$ and $h_2$, respectively, while correspondingly solving the GEP \eqref{Eq3.7} only takes 2.3, 5.5 and 46.4 GB memory with 4.3, 168 and 5845 seconds. That is, both the computational cost and time are significantly reduced by the preprocessing in section \ref{sec4}.

\begin{table}
\begin{center}
\begin{tabular}{ccccccccc}
  \hline
    Mesh&Dof1&Dof2&$k_{1,h_{l}}$& $k_{2,h_{l}}$& $k_{3,h_{l}}$& $k_{4,h_{l}}$& $k_{5,h_{l}}$&$k_{6,h_{l}}$\\
  \hline

  $l=0$ &77093&7796 & 5.2964&  5.9969&  6.2199&  7.4322& 8.6910&  9.7576\\

  $l=1$ &532126&53500&  5.2281&  5.9160&  6.1348&  7.2947&  8.4790&  9.4481\\

  $l=2$ &3805418&382271 & 5.2097&   5.8936&  6.1113& 7.2562&  8.4193&  9.3611\\

  $l=3$ &28970976&2907867&  5.2052&  5.8882&  6.1057 &7.2469 &  8.4048 & 9.3402\\

    \hline
\end{tabular}
    \caption{ The first six real transmission eigenvalues of Example 5.}\label{T5}
  \end{center}
\end{table}







\begin{table}
\begin{center}
\begin{tabular}{cccccccccc}
  \hline
  $\Omega$&$A$&Dof1&Dof2&$k_{1,h_{3}}$& $k_{2,h_{3}}$& $k_{3,h_{3}}$& $k_{4,h_{3}}$& $k_{5,h_{3}}$&$k_{6,h_{3}}$\\
  \hline
  &$A_{6}$&&& 5.2602& 5.9167&5.9168& 5.9168& 6.3506& 6.3506  \\

 $\Omega_6$& $A_{7}$&7201086&722772&4.7106& 5.0600& 5.5420& 5.7049& 6.2222& 6.3625  \\

 & $A_{8}$&&& 4.8690& 4.9720& 5.7786& 5.7893& 5.9318& 6.2632\\
    \hline

& $A_{6}$&&& 10.1747& 10.3596& 10.3609& 10.3620& 10.4774&  10.4789 \\

  $\Omega_7$ &$A_{7}$&6294048&630462& 8.7369& 8.8006& 8.8962& 8.9308&9.5239& 9.5707  \\

  &$A_{8}$&&& 9.2961& 9.3513& 9.4267& 9.4805& 9.6826& 9.9264\\
    \hline
    &$A_{6}$& &&9.4155& 9.5137& 9.5139& 9.5182& 9.5311&  9.5965  \\

  $\Omega_8$ & $A_{7}$&5711887&572794& 9.0059& 9.0089& 9.1381&9.1395& 9.2460& 9.2496  \\

  &$A_{8}$&&& 8.6118& 8.6170& 8.7366&8.7459&9.5856&  9.5943\\
    \hline
\end{tabular}
    \caption{ The first six real transmission eigenvalues of Example 6, 7 and 8.}\label{T6}
  \end{center}
    \vspace{-0.5em}
\end{table}









\subsection{ Convergence rate of transmission eigenvalues }\label{sec4.3}
 Figure \ref{Fig3} demonstrates that the discretization \eqref{3.1} can be carried out with $V_{h}$ for the 2D and 3D cases, and a second order accuracy for eigenvalues can be obtained both on convex and nonconvex domains. Therefore, the optimal convergence rate for eigenvalues can be obtained by this discrete mixed element scheme, which is consistent with the Theorem \ref{Thm:order}.
\begin{figure*}[!htb]
    \centering
    \subfigure[$\Omega_{1}$ with the anisotropic case $A_{1}$.]       {\includegraphics[width=0.40\hsize, height=0.24\hsize]{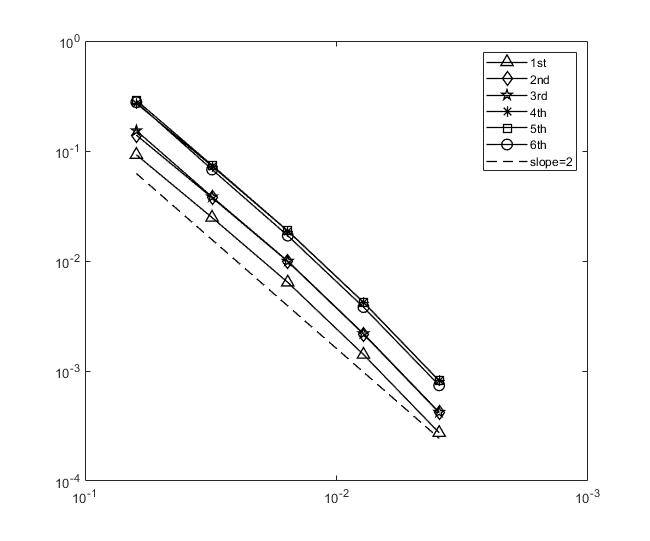}\label{3a}}
    \subfigure[$\Omega_{2}$ with the anisotropic case $A_{2}$.]      {\includegraphics[width=0.40\hsize, height=0.24\hsize]{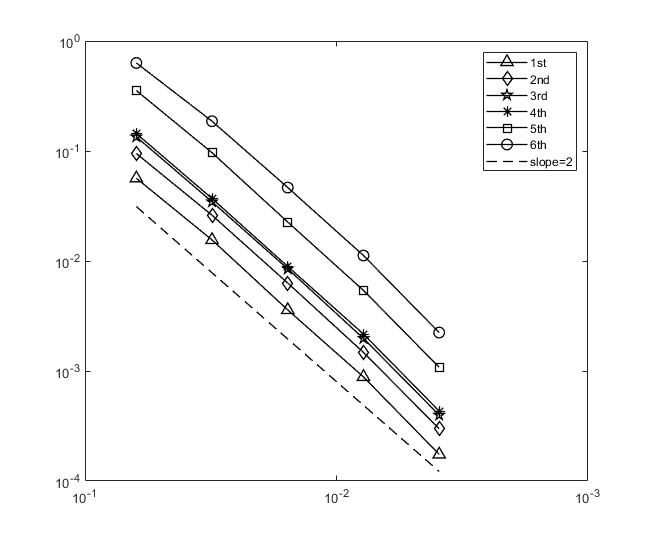}\label{3b}}
    \subfigure[$\Omega_{3}$ with the anisotropic case $A_{3}$.]       {\includegraphics[width=0.40\hsize, height=0.24\hsize]{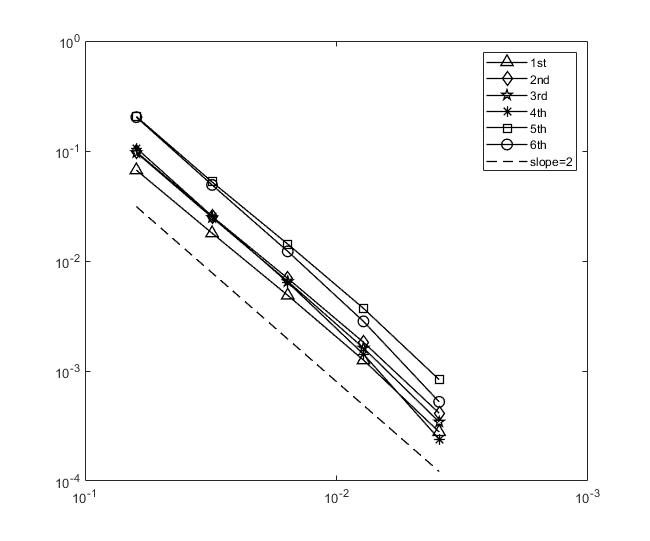}\label{3c}}
    \subfigure[$\Omega_{4}$ with the anisotropic case $A_{4}$.]       {\includegraphics[width=0.40\hsize, height=0.24\hsize]{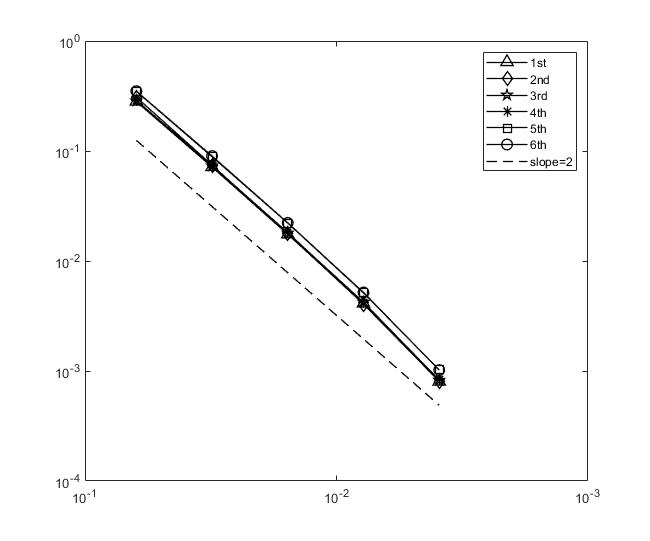}\label{3d}}
    \subfigure[$\Omega_{5}$ with the anisotropic case $A_{5}$.]       {\includegraphics[width=0.40\hsize, height=0.24\hsize]{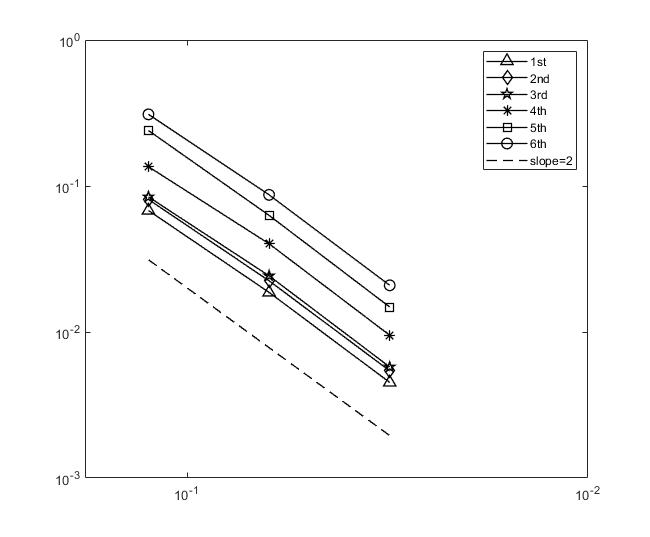}\label{5a}}
    \subfigure[$\Omega_{6}$ with the anisotropic case $A_{6}$.]      {\includegraphics[width=0.40\hsize, height=0.24\hsize]{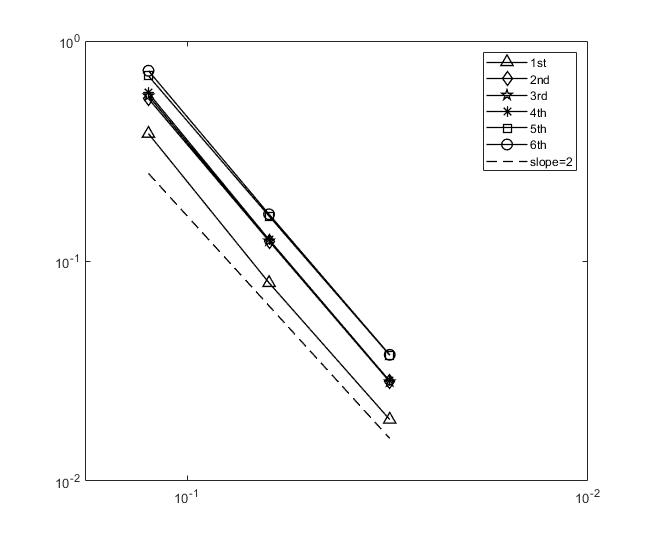}\label{5b}}
    \subfigure[$\Omega_{7}$ with the anisotropic case $A_{7}$.]       {\includegraphics[width=0.40\hsize, height=0.24\hsize]{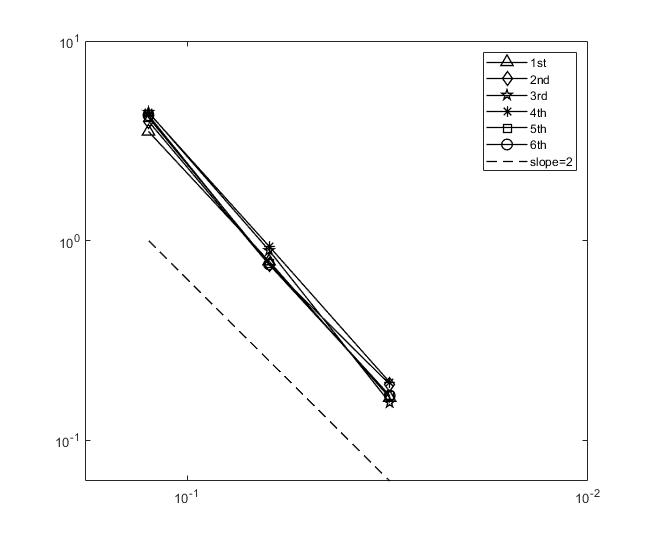}\label{5c}}
    \subfigure[$\Omega_{8}$ with the anisotropic case $A_{8}$.]       {\includegraphics[width=0.40\hsize, height=0.24\hsize]{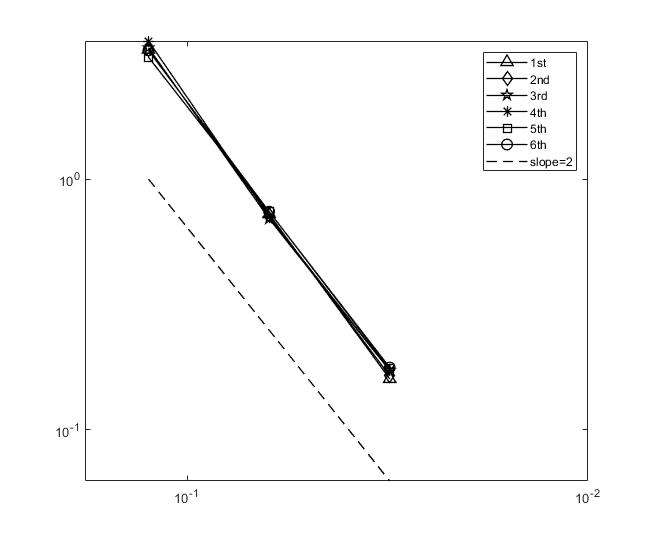}\label{5d}}
    \caption{ The convergence rates for 2 D and 3D examples. X-axis means the size of mesh and Y-axis means $|\lambda_{i,{h}_{l+1}}-\lambda_{i,{h}_{l}}|$.}
    \label{Fig3}
\end{figure*}

\section{ Conclusions and discussions}\label{sec6}
In this paper, we discuss the Helmholtz transmission eigenvalue problem for anisotropic inhomogeneous media with the index of refraction $n(x)\equiv 1$ in 2D and 3D, and its discretization using a mixed finite element scheme. Our key theoretical result is that the proposed mixed formulation is totally equivalent to {\eqref{2.4}} without introducing any spurious eigenvalues, and the proposed discretization scheme is easy to implement with firm theoretical support. In practice, the goal of computing a few smallest real transmission eigenvalues is hindered by the tremendous size of the resulting GEP, in that the $\boldsymbol{LDL}$ factorization cannot fit into the computer memory. We partially resolve this critical issue by the carefully desinged preprocessing shown in Section \ref{sec4}. With the prepocessing, not only the almost all of the huge eigenspace associated with the $\infty$ eigenvalue is  deflated, but also the size of the GEP is drastically reduced without deteriorating the sparsity, hence, the $\boldsymbol{LDL}$ factorization become feasible. Both the computational time and cost are significantly reduced, especially in the 3D case.

Numerical examples on the convex and nonconvex connected domains both in 2D and 3D confirm that the optimal convergence rate of the transmission eigenvalues can be achieved by the proposed scheme, which is consistent with the theoretical predication.

In the future, we will try to generalize this scheme to other types of transmission eigenvalue problems, such as elastic waves and Maxwell transmission eigenvalue problems. Further studies will be carried out so that the memory cost can be controlled as the mesh gets finer in 3D domains.

\section*{ Acknowledgments}
The research of S. Zhang is supported partially by the Strategic Priority Research Program of CAS with Grand No. XDB 41000000; the National Natural Science Foundation of China (NSFC) with Grant No. 11871465. Q. Liu and T. Li are supported partially by the NSFC with Grant No. 11971105 and the
Big Data Computing Center in Southeast University, China.

%
%

%
%


\end{document}